\definecolor{rouge}{RGB}{255,0,0}
\definecolor{bleu}{RGB}{0,0,255}
\definecolor{orange}{RGB}{122,0,122}
\newtheorem{theorem}{Theorem}
\newtheorem{definition}{Definition}
\newtheorem{proposition}{Proposition}
\newtheorem{lemma}{Lemma}
\newtheorem{remark}{Remark}
\newtheorem{example}{Example}
\newcounter{step}
\newenvironment{problem}[1][Problem]{\begin{trivlist}
\item[\hskip \labelsep {\bfseries #1}]}{\end{trivlist}}
\newcommand{\m}{\mathbb}
\newcommand{\1}{\mathds{1}}
\newcommand{\N}{\ensuremath{\mathds{N}}}
\renewcommand{\phi }{\varphi }
\def \1{\mathbf{1}}
\renewcommand{\P }{\mathds{P}}
\renewcommand{\geq }{\geqslant }
\renewcommand{\leq }{\leqslant }
\DeclareMathOperator*{\argmin}{argmin}
\DeclareMathOperator*{\bary}{Bar}
\DeclareMathOperator*{\sco}{Sco}
\DeclareMathOperator*{\graph}{Graph}
\DeclareMathOperator*{\diam}{diam}
\begin{document}
\bibliographystyle{plain}

\title{Pathwise uniform value in gambling houses and Partially Observable Markov Decision Processes}

\author{Xavier Venel \thanks{CES, Universit\'e Paris 1 Panth\'eon Sorbonne, Paris. France. Email: xavier.venel@univ-paris1.fr
}, Bruno Ziliotto\thanks{TSE (GREMAQ, Universit\' e Toulouse 1 Capitole), 21 all\' ee de Brienne, 31000
Toulouse, France.
}}


\maketitle

\begin{abstract}

%

In several standard models of dynamic programming (gambling houses, MDPs, POMDPs), we prove the existence of a robust notion of value for the infinitely repeated problem, namely the \textit{pathwise uniform value}. This solves two open problems. First, this shows that for any $\epsilon>0$, the decision-maker has a pure strategy $\sigma$ which is $\epsilon$-optimal in any $n$-stage game, provided that $n$ is big enough (this result was only known for behavior strategies, that is, strategies which use randomization). Second, the strategy $\sigma$ can be chosen such that under the long-run average payoff criterion,
the decision-maker has more than the limit of the $n$-stage values. 

\end{abstract}
%

\underline{Keywords:} Dynamic programming, Markov decision processes, Partial Observation, Uniform value, Long-run average payoff.\\

\underline{MSC2010:} \emph{Primary:} 90C39, \emph{Secondary:} 90C40, 37A50, 60J20.


\section*{Introduction}

The standard  model of Markov Decision Process (or Controlled Markov chain) was  introduced by Bellman \cite{Bellman_57} and has been extensively studied since then. In this model, at the beginning of every stage, a decision-maker perfectly observes the current state, and chooses an action accordingly, possibly randomly. The current state and the selected action determine a stage payoff and the law of the next state. There are two standard ways to aggregate the stream of payoffs. Given a strictly positive integer $n$, in the $n$-stage MDP, the total payoff is the Cesaro mean $n^{-1} \sum_{m=1}^n g_m$, where $g_m$ is the payoff at stage $m$. Given $\lambda \in (0,1]$, in the $\lambda$-discounted MDP, the total payoff is the $\lambda$-discounted sum $\lambda \sum_{m \geq 1} (1-\lambda)^{m-1} g_m$. The maximum payoff that the decision-maker can obtain in the $n$-stage problem (resp. $\lambda$-discounted problem) is denoted by $v_n$ (resp. $v_\lambda$). 

A huge part of the literature investigates \textit{long-term} MDPs, that is, MDPs which are repeated a large number of times. In the $n$-stage problem (resp. $\lambda$-discounted problem), this corresponds to $n$ being large (resp. $\lambda$ being small). A first approach is to determine whether $(v_n)$ and $(v_{\lambda})$ converge when $n$ goes to infinity and $\lambda$ goes to 0, and whether the two limits coincide. When this is the case, the MDP is said to have an \textit{asymptotic value}.  The asymptotic value represents the long-term payoff outcome. When the asymptotic value exists, a second approach consists in determining if for any $\epsilon>0$, there exists a behavior (resp. pure) strategy that is optimal up to $\epsilon$ in any $n$-stage and $\lambda$-discounted problem, provided that $n$ is big and $\lambda$ is small. When this is the case, the MDP is said to have a $\textit{uniform value}$ in behavior (resp. pure) strategies. 

A third approach is to define the payoff in the infinite problem as being the expectation of $\liminf_{n \rightarrow+\infty} n^{-1} \sum_{m=1}^n g_m$: in literature, this is referred as the \textit{long-run average payoff criterion}\footnote{In some papers, the decision-maker minimizes the cost: in this case, the long-run average payoff criterion corresponds to the \textit{long-run average cost criterion}.} (AP criterion,
see Arapostathis et al. \cite{SURVEYAC} for a review of the subject). We denote by $w_\infty$ the maximal payoff that the decision-maker can guarantee under this criterion. Clearly, under this criterion, the decision-maker cannot have more than $\liminf_{n \rightarrow+\infty} v_n$. A natural question is whether he can obtain $\liminf_{n \rightarrow+\infty} v_n$.

When the set space and action sets are finite, Blackwell \cite{Blackwell_62} has proved the existence of a pure strategy that is optimal for every discount factor close to 0, and one can deduce that the uniform value exists in pure strategies, and that under the AP criterion, the decision-maker can have $\lim_{n \rightarrow+\infty} v_n$.

In many situations, the decision-maker may not be perfectly informed of the current state variable. For instance, if the state variable represents a resource stock (like the amount of oil in an oil field), the quantity left, which represents the state, can be evaluated, but is not exactly known. This motivates the introduction of the more general model of Partially Observable Markov Decision Process (POMDP). In this model, at each stage, the decision-maker does not observe the current state, but instead receives a signal which is correlated to it. 
Rosenberg, Solan and Vieille \cite{RSV02} have proved that any POMDP has a uniform value in behavior strategies, when the state space, the action set and the signal set are finite. In the proof, the authors highlight the necessity that the decision-maker resort to behavior strategies, and ask whether the uniform value exists in pure strategies. They also raise the question of the behavior of the time averages of the payoffs, which is linked to the AP criterion. Renault \cite{R11} and Renault and Venel \cite{RV12} have provided two alternative proofs of the existence of the uniform value in behavior strategies in POMDPs, and also ask whether the uniform value exists in pure strategies. 

One of the main contributions of this paper is to solve this question positively. We prove that POMDPs have a uniform value in pure strategies. Moreover, for all $\epsilon>0$, under the AP criterion, the decision-maker can have $\lim_{n \rightarrow +\infty} v_n-\epsilon$. In fact, we prove this result in a much more general framework, as we shall see now.

The result of Rosenberg, Solan and Vieille \cite{RSV02} (existence of the uniform value in behavior strategies in POMDPs) has been generalized in several dynamic programming models with infinite state space and action set. The first one is to consider the model of gambling house. Introduced by Dubins and Savage \cite{Dubins_1965}, a gambling house is defined by a correspondence from a metric space $X$ to the set of probabilities on $X$. At every stage, the decision-maker chooses a probability on $X$ which is compatible with the correspondence and the current state. A new state is drawn from this probability, and this new state determines the stage payoff. When the state space is compact, and the correspondence is 1-Lipschitz, and the payoff function is continuous (for suitable metrics), the existence of the uniform value in behavior strategies stems from the main theorem in \cite{R11}. One can deduce from this result the existence of the uniform value in behavior strategies in MDPs and POMDPs, for a finite state space and any action and signal sets. Renault and Venel \cite{RV12} have extended the results of \cite{R11} to more general payoff evaluations. 

The proofs in Renault \cite{R11} and Renault and Venel \cite{RV12} are quite different from the one of Rosenberg, Solan and Vieille \cite{RSV02}. Still, they heavily rely on the use of behavior strategies for the decision-maker, and they do not provide any results concerning the AP criterion.

In this paper, we consider a gambling house with compact state space, closed graph correspondence and continuous payoff function. We show that if the family $\{v_n,n\geq 1\}$ is equicontinuous and $w_\infty$ is continuous, the gambling house has a uniform value in pure strategies. Moreover, for all $\epsilon>0$, the decision-maker can guarantee $\lim_{n \rightarrow+\infty} v_n-\epsilon$ under the AP criterion. This result especially applies to $1$-Lipschitz gambling houses. We deduce the same result for compact MDPs with 1-Lipschitz transition, and POMDPs with finite set space, compact action set and finite signal set.

Note that under an ergodic assumption on the transition function, like assuming that from any state, the decision-maker can make the state go back to the initial state (see Altman \cite{A94}), or assuming that the law of the state variable converges to an invariant measure (see Borkar \cite{Borkar_1988, Borkar_2000}), these results were already known. One remarkable feature of our proof is that we are able to use ergodic theory without any ergodic assumptions.

The paper is organized as follows. The first part presents the model of gambling house and recalls usual notions of value. The second part defines pathwise uniform value and states our results, that is, the existence of the pathwise uniform value in gambling houses, MDPs and POMDPs. The last three parts are dedicated to the proof of these results.


\section{Gambling houses}
\subsection{Model of gambling house}
Let us start with a few notations.
We denote by $\m{N}^*$ the set of strictly positive integers. If $A$ is a measurable space, we denote by $\Delta(A)$ the set of probability measures over $A$. If $(A,d)$ is a compact metric space, we will always equip $(A,d)$ with the Borelian algebra, and denote by $\mathcal{B}(A)$ the set of Borel subsets of $A$. The set of continuous functions from $A$ to $[0,1]$ is denoted by $\mathcal{C}(A,[0,1])$. The set $\Delta(A)$ is compact metric for the Kantorovich-Rubinstein distance $d_{KR}$, which metrizes the weak$^*$ topology. Recall that the distance $d_{KR}$ is defined for all $z$ and $z'$ in $\Delta(A)$ by
\[
d_{KR}(z,z'):=\sup_{f \in E_1} \left|\int_A f(x) z(dx)-\int_A f(x) z'(dx) \right|=\inf_{\pi \in \Pi(z,z')} \int_{A\times A} d(x,y) \pi(dx,dy),
\]
where $E_1\subset \mathcal{C}(A,[0,1])$ is the set of $1$-Lipschitz functions from $A$ to $[0,1]$ and $\Pi(z,z') \subset \Delta(A \times A)$ is the set of measures on $A \times A$ with first marginal $z$ and second marginal $z'$. Because $A$ is compact, the infimum is a minimum. For $f \in \mathcal{C}(A,[0,1])$, the linear extension of $f$ is the function $\hat{f} \in \mathcal{C}(\Delta(A),[0,1])$, defined for $z \in \Delta(A)$ by
\begin{equation*}
\hat{f}(z):=\int_{A} f(x) z(dx).
\end{equation*}

\noindent A \textit{gambling house} $\Gamma=(X,F,r)$ is defined by the following elements:
\begin{itemize}
\item $X$ is the \textit{state space}, which is assumed to be compact metric for some distance $d$.
\item $F:(X,d) \rightrightarrows (\Delta(X),d_{KR})$ is a correspondence with a closed graph and nonempty values.
\item $r:X \rightarrow [0,1]$ is the \textit{payoff function}, which is assumed to be continuous.
\end{itemize}

\begin{remark}
Because the state space is compact, $F$ is a closed graph correspondence if and only if it is an upper hemicontinuous correspondence with closed values.
\end{remark}




Let $x_0 \in X$ be an initial state. The gambling house starting from $x_0$ proceeds as follows. At each stage $m \geq 1$, the decision-maker chooses $z_{m} \in F(x_{m-1})$. A new state $x_{m}$ is drawn from the probability distribution $z_{m}$, and the decision-maker gets the payoff $r(x_{m})$. 


For the definition of strategies, we follow Maitra and Sudderth \cite[Chapter 2]{Maitra_96}. First, we need the following definition (see \cite[Chapter 11, section 1.8]{potentials}):
\begin{definition}
Let $\nu \in \Delta(\Delta(X))$. The \emph{barycenter} of $\nu$ is the probability measure $\mu=\bary(\nu) \in \Delta(X)$ such that for all $f \in \mathcal{C}(X,[0,1])$,
\[
\hat{f}(\mu)=\int_{\Delta(X)} \hat{f}(z) \nu(dz).
\]
Given $M$ a closed subset of $\Delta(X)$, we denote by $\sco M$ the \emph{strong convex hull} of the set  $M$, that is,
\begin{equation*}
\sco M :=\left\{\bary(\nu), \nu \in \Delta(M)\right\}.
\end{equation*}
Equivalently, $\sco M$ is the closure of the convex hull of $M$.
\end{definition}

For every $m \geq 1$, we denote by $H_m:=X^{m}$ the set of possible histories before stage $m$, which is compact for the product topology.

\begin{definition}
A \emph{behavior (resp. pure) strategy} $\sigma$ is a sequence of mappings $\sigma:=(\sigma_m)_{m \geq 1}$ such that for every $m\geq 1$,
\begin{itemize}
\item $\sigma_m:H_m \rightarrow \Delta(X)$ is (Borel) measurable, 
\item for all $h_m=(x_0,...,x_{m-1}) \in H_m$, $\sigma_m(h_m) \in \sco(F(x_{m-1}))$ (resp. 
$\sigma_m(h_m) \in F(x_{m-1})$).
\end{itemize}
We denote by $\Sigma$ (resp. $\Sigma_p$) the set of behavior (resp. pure) strategies.
\end{definition}
Note that $\Sigma_p \subset \Sigma$. The following proposition ensures that $\Sigma_p$ is nonempty. This is a special case of Kuratowski-Ryll-Nardzewski theorem (see \cite[Theorem 18.13, p. 600] {hitchhiker}.
\begin{proposition} \label{Dselection}
Let $K_1$ and $K_2$ be two compact metric spaces, and $\Phi: K_1 \rightrightarrows K_2$ be a closed graph correspondence with nonempty values. Then $\Phi$ admits a measurable selector, that is, there exists a measurable mapping $\phi: K_1 \rightarrow K_2$ such that for all $k \in K_1$, $\phi(k) \in K_2$.
\end{proposition}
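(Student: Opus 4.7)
The plan is to reduce the statement to the standard form of the Kuratowski--Ryll-Nardzewski selection theorem as stated in \cite{hitchhiker}, whose hypothesis requires the correspondence to be \emph{weakly measurable}, i.e.\ $\{k \in K_1 : \Phi(k) \cap U \neq \emptyset\}$ should be Borel measurable for every open set $U \subset K_2$. Since $\Phi$ already has nonempty (and necessarily closed, as $K_2$ is compact metric) values, the only thing left to verify is this weak measurability condition, after which the selection theorem applies directly.

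The first step is to exploit compactness of $K_2$: I would consider the projection $\pi_1: K_1 \times K_2 \to K_1$, which is a closed map because $K_2$ is compact. For any closed subset $C \subset K_2$, the set $\graph(\Phi) \cap (K_1 \times C)$ is closed in $K_1 \times K_2$ (intersection of two closed sets), and its image under $\pi_1$ is exactly
\[
\{k \in K_1 : \Phi(k) \cap C \neq \emptyset\},
\]
which is therefore closed in $K_1$.

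The second step is to upgrade this from closed sets to open sets. Since $K_2$ is a compact metric space, it is second countable, so every open set $U \subset K_2$ can be written as a countable union $U = \bigcup_{n \geq 1} C_n$ of closed sets (e.g.\ $C_n = \{y \in K_2 : d(y, K_2 \setminus U) \geq 1/n\}$). Then
\[
\{k \in K_1 : \Phi(k) \cap U \neq \emptyset\} = \bigcup_{n \geq 1} \{k \in K_1 : \Phi(k) \cap C_n \neq \emptyset\}
\]
is an $F_\sigma$ set and in particular Borel measurable. Hence $\Phi$ is weakly measurable.

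The third and final step is to invoke the Kuratowski--Ryll-Nardzewski theorem (\cite[Theorem 18.13]{hitchhiker}), which produces a Borel measurable map $\phi: K_1 \to K_2$ with $\phi(k) \in \Phi(k)$ for every $k$. There is no real obstacle here: the result is essentially a citation, and the only content of the proof is the routine verification that the closed-graph hypothesis combined with compactness of $K_2$ implies weak measurability, via the closed-projection argument above.
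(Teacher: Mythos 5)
Your proof is correct and takes essentially the same route as the paper: both reduce to the Kuratowski--Ryll-Nardzewski theorem \cite[Theorem 18.13]{hitchhiker} by establishing weak measurability of $\Phi$, the paper by citing \cite[Theorem 18.10]{hitchhiker} and \cite[Theorem 18.20]{hitchhiker} for that step, while you verify it directly through the closed-projection and $F_\sigma$ argument, which makes the proof self-contained. The only nitpick is that the closedness of the values $\Phi(k)$ follows from the closedness of the graph (each slice of a closed set is closed), not from the compactness of $K_2$, but this does not affect the argument.
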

\begin{proof}
In \cite{hitchhiker}, the theorem is stated for weakly measurable correspondences. By \cite[Theorem 18.10, p. 598]{hitchhiker} and \cite[Theorem 18.20, p. 606]{hitchhiker}, any correspondence satisfying the assumptions of the proposition is weakly measurable, thus the proposition holds. 
\end{proof}


\begin{definition}
A strategy $\sigma \in \Sigma$ is \emph{Markov} if there exists a measurable mapping $f: \m{N}^* \times X \rightarrow \Delta(X)$ such that for every $h_m=(x_0,...,x_{m-1}) \in H_m$, $\sigma(h_{m})=f(m,x_{m-1})$. When this is the case, we identify $\sigma$ with $f$.

A strategy $\sigma$ is \emph{stationary} if there exists a measurable mapping $f:X \rightarrow \Delta(X)$ such that for every $h_m=(x_0,...,x_{m-1}) \in H_m$, $\sigma(h_m)=f(x_{m-1})$. When this is the case, we identify $\sigma$ with $f$.
\end{definition}



Let $H_{\infty}:=X^{\m{N}}$ be the set of all possible plays in the gambling house $\Gamma$. 
By the Kolmogorov extension theorem, an initial state $x_0 \in X$ and a behavior strategy $\sigma$ determine a unique probability measure over $H_{\infty}$, denoted by $\m{P}^{x_0}_{\sigma}$.




Let $x_0 \in X$ and $n \geq 1$. The payoff in the $n$-stage problem starting from $x_0$ is defined for $\sigma \in \Sigma$ by
\[
\gamma_n(x_0,\sigma):=\m{E}^{x_0}_{\sigma}\left( \frac{1}{n} \sum_{m=1}^{n} r_m \right),
\]
where $r_m:=r(x_m)$ is the payoff at stage $m \in \m{N}^*$. 
The value $v_n(x_0)$ of this problem is the maximum expected payoff with respect to behavior strategies:
\[
v_n(x_0):=\sup_{\sigma \in \Sigma} \gamma_n(x_0,\sigma).
\]
By Feinberg \cite[Theorem 5.2]{F96}, any behavior strategy can be assimilated to a probability measure on the set of pure strategies. It follows that the above supremum is reached at a pure strategy. \\


\begin{remark} \label{DextensionF}
For $\mu \in \Delta(X)$, one can also define the gambling house with initial distribution $\mu$, where the initial state is drawn from $\mu$ and announced to the decision-maker. The definition of strategies and values are the same, and for all $n \in \m{N}^*$, the value of the $n$-stage gambling house starting from $\mu$ is equal to $\hat{v}_n(\mu)$. 
\end{remark}
\subsection{Long-term gambling houses}
\subsubsection{Uniform value}



\begin{definition}
Let $x_0 \in X$. The gambling house $\Gamma(x_0)$ has an \emph{asymptotic value} $v_{\infty}(x_0) \in [0,1]$ if the sequence $(v_n(x_0))_{n\geq 1}$ converges to $v_\infty(x_0)$.
\end{definition}

\begin{definition}
Let $x_0 \in X$. The gambling house $\Gamma(x_0)$ has a \emph{uniform value} $v_{\infty}(x_0) \in [0,1]$ in behavior (resp. pure) strategies if it has an asymptotic value $v_{\infty}(x_0)$ and for every $\varepsilon>0$, there exists $n_0 \in \m{N}^*$ and a behavior (resp. pure) strategy $\sigma$ such that for all $n\geq n_0$,
\[
\gamma_n(x_0,\sigma) \geq v_{\infty}(x_0)-\varepsilon.
\]
\end{definition}


\begin{definition}
A gambling house $\Gamma$ is $1$-Lipschitz if its correspondence $F$ is $1$-Lipschitz, that is, for every $x \in X$, every $u\in F(x)$ and every $y\in X$, there exists $w\in F(y)$ such that $d_{KR}(u,w) \leq d(x,y)$.
 \end{definition}
Renault and Venel \cite{RV12} have proved that any 1-Lipschitz gambling house has a  uniform value in behavior strategies\footnote{In fact, their model of gambling house is slightly different: they do not assume that $F$ is closed-valued, but instead assume that it takes values in the set of probability measures on $X$ with finite support.}. They asked about the existence of the uniform value in pure strategies. This is a recurring open problem in the literature. In the framework of POMDPs, this open problem already appeared in Rosenberg, Solan and Vieille \cite{RSV02} and in Renault \cite{R11}.

 
\subsubsection{The long-run average payoff criterion}
To study long-term dynamic programming problems, an alternative to the uniform approach is to associate a payoff to each infinite history. Given an initial state $x_0 \in X$, the \textit{infinitely repeated} gambling house $\Gamma_{\infty}(x_0)$ is the problem with strategy set $\Sigma$, and payoff function $\gamma_{\infty}$ defined for all $\sigma \in \Sigma$ by
\begin{equation*}
\gamma_{\infty}(x_0,\sigma):=\m{E}^{x_0}_{\sigma}\left(\liminf_{n \rightarrow +\infty} \frac{1}{n} \sum_{m=1}^n r_m \right).
\end{equation*}
In the literature, the above payoff is often referred as the \textit{long-run average payoff criterion} (see \cite{SURVEYAC}). The value of $\Gamma_{\infty}(x_0)$ is
\begin{equation*}
w_{\infty}(x_0):=\sup_{\sigma \in \Sigma} \gamma_{\infty}(x_0,\sigma).
\end{equation*}
\begin{remark}
The above supremum may not be reached: there may not exist 0-optimal strategies in $\Gamma_{\infty}(x_0)$ (see for example Rosenberg, Solan and Vieille \cite{RSV02}). 
\end{remark}
The following proposition plays a key role in this paper: 
\begin{proposition} \label{Dpurification}
For all $\epsilon>0$, there exists $\epsilon$-optimal pure strategies in $\Gamma_{\infty}(x_0)$.
\end{proposition}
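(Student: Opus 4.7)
The plan is to purify an $\epsilon$-optimal behavior strategy using Feinberg's theorem, which the paper has already invoked in the analysis of $n$-stage values right after the definition of $v_n$. Fix $\epsilon>0$. By definition of $w_\infty(x_0)$ as a supremum, I pick a behavior strategy $\sigma \in \Sigma$ with $\gamma_\infty(x_0,\sigma) \geq w_\infty(x_0)-\epsilon$. The goal is then to extract a pure strategy $\tau \in \Sigma_p$ achieving at least the same long-run average payoff.

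The core step is to apply Feinberg \cite[Theorem 5.2]{F96} to $\sigma$, which produces a probability measure $\mu$ on $\Sigma_p$ (endowed with the appropriate $\sigma$-algebra) such that
\[
\m{P}^{x_0}_\sigma \;=\; \int_{\Sigma_p} \m{P}^{x_0}_\tau \, d\mu(\tau).
\]
The functional $\Phi: H_\infty \to [0,1]$ defined by $\Phi(h):=\liminf_{n \to +\infty} n^{-1} \sum_{m=1}^n r(x_m)$ is a Borel-measurable, $[0,1]$-valued function on $H_\infty$ (as the $\liminf$ of continuous functions of finitely many coordinates). Because $\Phi$ is nonnegative and bounded, Fubini--Tonelli applies without further integrability checks, so
\[
\gamma_\infty(x_0,\sigma) \;=\; \int_{H_\infty} \Phi \, d\m{P}^{x_0}_\sigma \;=\; \int_{\Sigma_p} \gamma_\infty(x_0,\tau) \, d\mu(\tau).
\]
Since the left-hand side is at least $w_\infty(x_0)-\epsilon$, the $\mu$-average of $\tau \mapsto \gamma_\infty(x_0,\tau)$ is at least $w_\infty(x_0)-\epsilon$, and therefore there exists $\tau^\ast \in \Sigma_p$ (for instance, in the support of $\mu$) with $\gamma_\infty(x_0,\tau^\ast) \geq w_\infty(x_0)-\epsilon$. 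This $\tau^\ast$ is the desired $\epsilon$-optimal pure strategy.

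I do not expect any real obstacle: the measurability of $\tau \mapsto \m{P}^{x_0}_\tau$ on $\Sigma_p$ and the compatibility of the $\sigma$-algebra used in the disintegration are precisely what Feinberg's theorem supplies, and Fubini--Tonelli is applicable because $\Phi$ is bounded. The one slightly delicate point worth flagging in a written version is that, in contrast with the finite-horizon case where one can argue pointwise (the supremum of expected Cesaro means is attained at a pure strategy), for the $\mathbb{E}[\liminf]$ criterion we genuinely need the global Feinberg decomposition rather than a per-stage purification, because the $\liminf$ is a tail functional of the whole play.
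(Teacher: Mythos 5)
Your proof is correct and follows exactly the paper's route: the paper's own proof of this proposition is simply the observation that, as for the $n$-stage values, it is a direct consequence of Feinberg's Theorem 5.2, i.e.\ the decomposition of a behavior strategy as a mixture of pure strategies, combined with the fact that the $\liminf$-payoff is a bounded measurable functional of the play. You have merely written out the details (the mixture representation of $\m{P}^{x_0}_\sigma$, Fubini--Tonelli for the bounded tail functional, and the averaging argument) that the paper leaves implicit.
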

\begin{proof}
Exactly like for the $n$-stage game, this result is a direct consequence of Theorem 5.2 in Feinberg \cite{F96}. 
\end{proof}
If $\Gamma(x_0)$ has a uniform value $v_{\infty}(x_0)$, we have $w_{\infty}(x_0) \leq v_{\infty}(x_0)$ by the dominated convergence theorem. A natural question is to ask whether the equality holds. When this is the case, it significantly strengthens the notion of uniform value, as shown by the following example.

\begin{example}
There are two states, $x$ and $x^*$, and $F(x)=F(x^*)=\left\{x,x^* \right\}$. Moreover, $r(x)=0$ and $r(x^*)=1$. Thus, at each stage, the decision-maker has to choose between having a payoff 0 and having a payoff 1. Obviously, this problem has a uniform value equal to 1. Let $\epsilon>0$. Let $\sigma$ be the strategy such that for all $n \in \m{N}$, at stage $2^{2^n}-1$, the decision-maker chooses $x$ with probability $\epsilon/2$, and sticks to this choice until stage $2^{2^{n+1}}-1$; with probability $1-\epsilon/2$, he chooses $x^*$, and sticks to this choice until stage $2^{2^{n+1}}-1$. The strategy $\sigma$ is uniformly $\epsilon$-optimal: there exists $n_0 \in \m{N}^*$ such that for all $n \geq n_0$, 
\begin{equation*}
\gamma_n(x,\sigma) \geq 1-\epsilon.
\end{equation*}
Nonetheless, by the law of large numbers, for any $n_0 \in \m{N}^*$, there exists a random time $T$ such that $\m{P}^{x}_{\sigma}$ almost surely, $T \geq n_0$ and
\begin{equation*}
\frac{1}{T} \sum_{m=1}^T r_m \leq \epsilon.
\end{equation*}
Therefore, the strategy $\sigma$ does not guarantee more than $\epsilon$ in the game $\Gamma_\infty(x)$.

\end{example}

\section{Main results}
\subsection{Gambling houses}


We introduce a stronger notion of uniform value, which allows us to deal with the two open questions mentioned in the previous section at the same time. 


\begin{definition} \label{Ddefpath}
Let $x_0 \in X$. The gambling house $\Gamma(x_0)$ has a \emph{pathwise uniform value} in behavior (resp. pure) strategies if 
\begin{itemize}
\item
The gambling house $\Gamma(x_0)$ has an asymptotic value $v_{\infty}(x_0)$.
\item
For all $\epsilon>0$, there exists a behavior (resp. pure) strategy $\sigma$ such that
\begin{equation*}
\gamma_{\infty}(x_0,\sigma) \geq v_{\infty}(x_0)-\epsilon.
\end{equation*}
\end{itemize}
A strategy $\sigma$ satisfying the above equation is called $\textit{pathwise $\epsilon$-optimal strategy}$. 
When for all $x_0 \in X$, $\Gamma(x_0)$ has a pathwise uniform value in behavior (resp. pure) strategies, we say that $\Gamma$ has a pathwise uniform value in behavior (resp. pure) strategies. 
\end{definition}
Proposition \ref{Dpurification} implies that there exists a pathwise uniform value in behavior strategies if and only if there exists a pathwise uniform value in pure strategies. The following proposition shows that the concept of pathwise uniform value is more general than the concept of uniform value. 
\begin{proposition}\label{pathuni}
Assume that $\Gamma(x_0)$ has a pathwise uniform value (in behavior or pure strategies). Then it has a uniform value in  pure strategies.
\end{proposition}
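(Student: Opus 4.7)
The plan is to deduce this directly from Fatou's lemma applied to the Cesaro averages of the payoffs, combined with the purification statement (Proposition \ref{Dpurification}).

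Fix $\epsilon>0$. By hypothesis, $\Gamma(x_0)$ has an asymptotic value $v_\infty(x_0)$ and admits a (behavior or pure) strategy $\sigma$ with $\gamma_\infty(x_0,\sigma) \geq v_\infty(x_0)-\epsilon/2$. By Proposition \ref{Dpurification}, I may and will assume $\sigma \in \Sigma_p$; equivalently, if $\sigma$ is a behavior strategy satisfying the pathwise inequality, then Feinberg's disintegration gives a pure strategy achieving at least the same value of $\gamma_\infty(x_0,\cdot)$ (up to an arbitrarily small loss).

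The key observation is that the sequence of random variables $Y_n := \frac{1}{n}\sum_{m=1}^n r_m$ takes values in $[0,1]$, hence is bounded below by $0$. Fatou's lemma applied under $\m{P}^{x_0}_\sigma$ therefore yields
\[
\gamma_\infty(x_0,\sigma) = \m{E}^{x_0}_\sigma\Bigl(\liminf_{n\to+\infty} Y_n\Bigr) \leq \liminf_{n\to+\infty} \m{E}^{x_0}_\sigma(Y_n) = \liminf_{n\to+\infty} \gamma_n(x_0,\sigma).
\]
Combining this with the choice of $\sigma$ gives $\liminf_{n\to+\infty} \gamma_n(x_0,\sigma) \geq v_\infty(x_0)-\epsilon/2$, so there exists $n_0 \in \m{N}^*$ such that $\gamma_n(x_0,\sigma) \geq v_\infty(x_0)-\epsilon$ for all $n \geq n_0$. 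Since $v_\infty(x_0)$ is already the limit of $(v_n(x_0))$, this is exactly the definition of uniform value in pure strategies.

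There is essentially no hard step here; the whole argument is that pathwise convergence ``upgrades'' to uniform convergence of expectations via Fatou, and pureness is free thanks to Proposition \ref{Dpurification}. The only point to be careful about is that Fatou must be applied in the correct direction, which is legitimate because payoffs lie in $[0,1]$ and the Cesaro averages are uniformly bounded, so no integrability issue arises.
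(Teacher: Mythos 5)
Your proof is correct and follows essentially the same route as the paper: purify the pathwise $\epsilon$-optimal strategy via Proposition \ref{Dpurification}, then apply Fatou's lemma to the bounded Cesaro averages to pass from the pathwise guarantee to a lower bound on $\liminf_n \gamma_n(x_0,\sigma)$, which gives the uniform value in pure strategies. The only differences (the $\epsilon/2$ bookkeeping and the explicit choice of $n_0$) are cosmetic.
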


\begin{proof}
By Proposition \ref{Dpurification}, $\Gamma(x_0)$ has a pathwise uniform value in pure strategies. Let $\epsilon>0$, and $\sigma$ be a pathwise $\epsilon$-optimal pure strategy. We have
\begin{equation*}
\m{E}^{x_0}_{\sigma}\left(\liminf_{n \rightarrow +\infty} \frac{1}{n} \sum_{m=1}^n r_m \right) \geq v_{\infty}(x_0)-\epsilon.
\end{equation*}
By Fatou's lemma, it follows that
\begin{equation*}
\liminf_{n \rightarrow +\infty} \m{E}^{x_0}_{\sigma}\left( \frac{1}{n} \sum_{m=1}^n r_m \right) \geq v_{\infty}(x_0)-\epsilon,
\end{equation*}
and the gambling house $\Gamma(x_0)$ has a uniform value in pure strategies.

%


\end{proof}
We can now state our main theorem concerning gambling houses:
\begin{theorem}\label{Dtheo1}
 Let $\Gamma$ be a gambling house such that $\{v_n,n \geq 1\}$ is uniformly equicontinuous and $w_\infty$ is continuous. Then $\Gamma$ has a pathwise uniform value in pure strategies. Consequently, it has a uniform value in pure strategies, and
 \begin{equation*}
w_{\infty}=v_{\infty}.
\end{equation*}
\end{theorem}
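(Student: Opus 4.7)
The theorem decomposes into three goals: (a) existence of the asymptotic value $v_\infty$; (b) construction, for every $x_0 \in X$ and every $\epsilon > 0$, of a pathwise $\epsilon$-optimal strategy; and (c) the two corollaries (uniform value in pure strategies and $w_\infty = v_\infty$). Parts (a) and (c) are comparatively routine; the bulk of the work is in (b). By Proposition~\ref{pathuni}, a pathwise uniform value (in behavior or pure strategies) upgrades for free to a uniform value in pure strategies; and if $\sigma$ is a pathwise $\epsilon$-optimal strategy, then $w_\infty(x_0) \geq \gamma_\infty(x_0,\sigma) \geq v_\infty(x_0) - \epsilon$ for every $\epsilon > 0$, which combined with the trivial reverse inequality yields $w_\infty = v_\infty$.

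For (a), the equicontinuous family $\{v_n\}$ is relatively compact in $\mathcal{C}(X,[0,1])$ by Arzel\`a-Ascoli. Passing to the limit in the Bellman equation $n v_n(x) = \sup_{z \in F(x)} \int_X [r(y) + (n-1) v_{n-1}(y)] z(dy)$, using the closed graph of $F$ and uniform convergence along a subsequence, every cluster point $\phi$ solves $\phi(x) = \sup_{z \in F(x)} \int_X \phi \, dz$. A uniqueness / Tauberian argument in the spirit of Renault~\cite{R11} and Renault and Venel~\cite{RV12} forces all cluster points to coincide, so $v_n \to v_\infty$ uniformly with $v_\infty$ continuous.

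For (b), fix $x_0$ and $\epsilon > 0$, and choose $N$ large enough that $\|v_N - v_\infty\|_\infty < \epsilon/3$. Decompose time into blocks of length $N$; in block $k$, starting from the current state $Y_{k-1}$, play a pure $\epsilon/3$-optimal strategy for the $N$-stage problem from $Y_{k-1}$ (existence via Proposition~\ref{Dselection} and the Feinberg purification following the definition of $v_n$). Using the dynamic programming relation for $v_{N+M}(Y_{k-1})$ and letting $M \to \infty$, together with $v_n \to v_\infty$ uniformly, one shows that $(v_\infty(Y_k))_{k\geq 0}$ is an $\eta_N$-martingale bounded in $[0,1]$ with $\eta_N \to 0$, hence by the almost-martingale convergence theorem it converges almost surely to a $[0,1]$-valued random variable $V_\infty$ with $\E[V_\infty] \geq v_\infty(x_0) - \eta_N$.

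The last and most delicate step is the pathwise control. An ergodic-theoretic argument applied to the block-level process---which, despite the absence of an a priori ergodic assumption, carries enough Markovian structure for a stationarization---yields that the pathwise block-average payoff concentrates around its conditional expectation $v_N(Y_{k-1}) \approx v_\infty(Y_{k-1})$, which itself converges to $V_\infty$ almost surely. Continuity of $w_\infty$ enters here: once $v_\infty(Y_k)$ has almost settled near $V_\infty$, a measurable-selection argument (using Proposition~\ref{Dpurification} state-by-state and the continuity of $w_\infty$ to guarantee joint measurability and approximate optimality on a neighborhood of each limit state) allows us to switch to a pure $\epsilon$-optimal strategy for $\Gamma_\infty$ at the current state, which delivers at least $w_\infty \approx V_\infty$ pathwise. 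Putting the pieces together gives $\gamma_\infty(x_0,\sigma) \geq v_\infty(x_0) - C\epsilon$ for a universal constant $C$. The main obstacle is precisely this pathwise passage: classical ergodic theorems require a stationarity the problem does not provide, so the core of the argument is an ingenious stationarization (or a subadditive ergodic argument applied to a carefully chosen observable), with the continuity of $w_\infty$ serving as the conceptual linchpin that prevents a pathwise deficit at the switch.
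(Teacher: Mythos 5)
There is a genuine gap, and it sits exactly at the heart of the theorem. Your block construction plays, in each block, a pure strategy that is ($\epsilon/3$-)optimal for the $N$-stage problem from the current state $Y_{k-1}$. Such a strategy controls only the expected payoff inside the block; it gives no lower bound whatsoever on the continuation value $v_\infty(Y_k)$ at the end of the block (an $N$-stage optimal strategy may ``burn'' the future), so the assertion that $(v_\infty(Y_k))_k$ is an $\eta_N$-martingale with $\mathbb{E}[V_\infty]\geq v_\infty(x_0)-\eta_N$ does not follow from the dynamic programming relation and is false in general. Worse, the final switch is circular: to conclude you invoke an $\epsilon$-optimal strategy for $\Gamma_\infty$ at the switching state and claim it ``delivers at least $w_\infty\approx V_\infty$,'' but that requires $w_\infty(Y_k)\geq v_\infty(Y_k)-\epsilon$ at that state, i.e.\ precisely the equality $w_\infty=v_\infty$ (locally) that the theorem is supposed to prove. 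The ergodic step you defer to an ``ingenious stationarization or subadditive argument'' is exactly the missing core. The paper resolves it differently: it convexifies the house into a linear, closed-graph correspondence $H$ on $\Delta(X)$, takes a limit of occupation measures of $n$-stage optimal strategies to produce an $H$-invariant measure $\mu^*$ with $\hat{r}(\mu^*)=\hat{v}(\mu^*)=v(x_0)$ (Lemma \ref{Dinvariant}), applies the pathwise ergodic theorem of Hern\'andez-Lerma and Lasserre (Theorem \ref{Dergodic}) to the stationary relaxed selector $\sigma^*$ -- this needs only invariance, not ergodicity -- to get $w_\infty=v$ $\mu^*$-a.s.\ (Lemma \ref{Dbirkhoff}), and only then uses continuity of $w_\infty$ in a junction lemma (Lemma \ref{Djunction_opt}) after steering, during an initial fraction of the play of an optimal $n_1$-stage strategy, to a stage where the state is with high probability near the support of $\mu^*$ while $\mathbb{E}[v(x_{n_3})]$ is still close to $v(x_0)$.

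Your part (a) is also not salvageable as stated: any constant function solves the limit equation $\phi(x)=\sup_{z\in F(x)}\int_X \phi\, dz$, so no uniqueness argument can identify the cluster points of $(v_n)$ this way, and under the theorem's hypotheses the existence of the asymptotic value is not a routine Arzel\`a-Ascoli fact. The paper does not prove convergence of $v_n$ separately at all: it works with $v:=\limsup_n v_n$, constructs for every $\epsilon>0$ a strategy with $\gamma_\infty(x_0,\sigma)\geq v(x_0)-\epsilon$, and deduces $w_\infty\geq \limsup_n v_n$; combined with $w_\infty\leq\liminf_n v_n$ (Fatou, as in Proposition \ref{pathuni}) this yields both the convergence of $(v_n)$ and $w_\infty=v_\infty$ simultaneously. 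Your reduction of the corollaries (c) from the pathwise statement is correct and matches the paper, but (a) and (b) as proposed do not constitute a proof.
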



In particular, we obtain the following result.

\begin{theorem}\label{D1Lips}
Let $\Gamma$ be a 1-Lipschitz gambling house. Then $\Gamma$ has a pathwise uniform value in pure strategies. Consequently, it has a uniform value in pure strategies, and
\begin{equation*}
w_{\infty}=v_{\infty}.
\end{equation*}

\end{theorem}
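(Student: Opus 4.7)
The plan is to reduce Theorem \ref{D1Lips} to Theorem \ref{Dtheo1} by verifying, for a 1-Lipschitz gambling house $\Gamma$, that the family $\{v_n,n\geq 1\}$ is uniformly equicontinuous and that $w_{\infty}$ is continuous. Both properties will follow from a single coupling construction based on the 1-Lipschitz property of $F$.

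Fix $x_0, y_0 \in X$ and a behavior strategy $\sigma$ from $x_0$. I would build by induction on $m$ a joint process $(x_m, y_m)_{m \geq 0}$ on $(X \times X)^{\m{N}}$ whose first marginal equals $\m{P}^{x_0}_\sigma$, whose second marginal is $\m{P}^{y_0}_{\sigma'}$ for some strategy $\sigma'$ in $\Gamma$, and such that $D_m := d(x_m, y_m)$ is a supermartingale. At stage $m$, $\sigma$ prescribes $z_m \in \sco(F(x_{m-1}))$, which can be written $z_m = \bary(\nu_m)$ for some $\nu_m \in \Delta(F(x_{m-1}))$. The 1-Lipschitz property together with Proposition \ref{Dselection} produces a measurable selection $\phi_m : F(x_{m-1}) \to F(y_{m-1})$ satisfying $d_{KR}(u, \phi_m(u)) \leq d(x_{m-1}, y_{m-1})$. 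Setting $z_m' := \bary((\phi_m)_*\nu_m) \in \sco(F(y_{m-1}))$, the dual formulation of $d_{KR}$ gives $d_{KR}(z_m, z_m') \leq d(x_{m-1}, y_{m-1})$. Drawing $(x_m, y_m)$ from an optimal Kantorovich coupling of $z_m$ and $z_m'$ yields $\m{E}[D_m \mid (x_j, y_j)_{j \leq m-1}] = d_{KR}(z_m, z_m') \leq D_{m-1}$, so $\m{E}[D_m] \leq d(x_0, y_0)$ for every $m$.

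Let $\omega_r$ be a concave modulus of uniform continuity of $r$, which exists since $r$ is continuous on the compact set $X$. For uniform equicontinuity of $\{v_n\}$, take $\sigma$ that is $\epsilon$-optimal for $v_n(x_0)$. The coupling construction gives
\[
v_n(x_0) - v_n(y_0) \leq \epsilon + \frac{1}{n}\sum_{m=1}^n \m{E}\bigl[\omega_r(D_m)\bigr] \leq \epsilon + \frac{1}{n}\sum_{m=1}^n \omega_r\bigl(\m{E}[D_m]\bigr) \leq \epsilon + \omega_r(d(x_0, y_0)),
\]
using Jensen's inequality for the middle step. Symmetrizing and letting $\epsilon \to 0$ gives $|v_n(x_0) - v_n(y_0)| \leq \omega_r(d(x_0, y_0))$ uniformly in $n$.

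For continuity of $w_{\infty}$, take $\sigma$ that is $\epsilon$-optimal for $w_{\infty}(x_0)$ and apply the same construction. Since $(D_m)$ is a nonnegative bounded supermartingale, the martingale convergence theorem yields $D_m \to D_{\infty}$ almost surely with $\m{E}[D_{\infty}] \leq d(x_0, y_0)$. Combining the pointwise inequality $\liminf_n a_n - \liminf_n b_n \leq \limsup_n(a_n - b_n)$ with $|r(x_m) - r(y_m)| \leq \omega_r(D_m)$ and the Cesaro convergence $\tfrac{1}{n}\sum_{m=1}^n \omega_r(D_m) \to \omega_r(D_{\infty})$ a.s. gives, almost surely, $\liminf_n \tfrac{1}{n}\sum_{m=1}^n r(x_m) - \liminf_n \tfrac{1}{n}\sum_{m=1}^n r(y_m) \leq \omega_r(D_{\infty})$. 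Taking expectation and applying Jensen,
\[
w_{\infty}(x_0) - \epsilon - w_{\infty}(y_0) \leq \m{E}[\omega_r(D_{\infty})] \leq \omega_r(\m{E}[D_{\infty}]) \leq \omega_r(d(x_0, y_0)).
\]
Symmetrizing yields uniform continuity of $w_{\infty}$, so both hypotheses of Theorem \ref{Dtheo1} are satisfied and its conclusion gives Theorem \ref{D1Lips}. The main obstacle I anticipate is the measurability bookkeeping in the coupling construction — ensuring that the selections $\phi_m$ and the optimal Kantorovich couplings can be chosen measurably as functions of the full history — but this is essentially routine given Proposition \ref{Dselection}.
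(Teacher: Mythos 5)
Your proposal is correct and follows essentially the same route as the paper: a coupling construction in which the 1-Lipschitz property of $F$ (via measurable selections as in Proposition \ref{Dselection} and a measurable choice of optimal Kantorovich couplings) makes $d(x_m,y_m)$ a supermartingale, from which one deduces the uniform equicontinuity of $\{v_n\}$ and continuity of $w_\infty$ and then invokes Theorem \ref{Dtheo1}. The only cosmetic difference is that the paper first reduces to a $1$-Lipschitz payoff $r$ so that $v_n$ and $w_\infty$ become $1$-Lipschitz, whereas you keep $r$ general and use a concave modulus of continuity with Jensen's inequality; the measurability steps you flag (selection of couplings, and recovering the second-coordinate strategy by disintegration using convexity of $\sco F$) are exactly what the paper's Lemma \ref{Dselec_couplage} and Proposition \ref{Djunction} carry out.
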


In the two next subsections, we present similar results for MDPs and POMDPs. 
\subsection{MDPs} \label{subMDP}
A Markov Decision Process (MDP) is a $4$-uple $\Gamma=(K,I,g,q)$, where $(K,d_K)$ is a compact metric state space, $(I,d_I)$ is a compact metric action set, $g:K \times I \rightarrow [0,1]$ is a continuous payoff function, and $q:K \times I \rightarrow \Delta(K)$ is a continuous transition function. As usual, the set $\Delta(K)$ is equipped wih the KR metric, and we assume that for all $i \in I$, $q(.,i)$ is $1$-Lipschitz.
Given an initial state $k_1 \in K$ known by the decision-maker, the MDP $\Gamma(k_1)$ proceeds as follows. At each stage $m \geq 1$, the decision-maker chooses $i_m \in I$, and gets the payoff $g_m:=g(k_m,i_m)$. A new state $k_{m+1}$ is drawn from $q(k_m,i_m)$, and is announced to the decision-maker. Then, $\Gamma(k_1)$ moves on to stage $m+1$. 
A behavior (resp. pure) strategy is a measurable map $\sigma : \cup_{m \geq 1}  K \times  (I \times K)^{m-1} \rightarrow \Delta(I)$ (resp. $\sigma : \cup_{m \geq 1} K \times (I \times K)^{m-1} \rightarrow I$). An initial state $k_1$ and a strategy $\sigma$ induce a probability measure $\m{P}^{k_1}_{\sigma}$ on the set of plays $H_{\infty}=(K \times I)^{\m{N}^*}$.

The notion of uniform value is defined in the same way as in gambling houses. 
We prove the following theorem:
\begin{theorem} \label{DthmMDP}
The MDP $\Gamma$ has a pathwise uniform value in pure strategies, that is, for all $k_1 \in K$, the two following statements hold: 
\begin{itemize}
\item
The sequence $(v_{n}(k_1))$ converges when $n$ goes to infinity to some real number $v_{\infty}(k_1)$.
\item
For all $\epsilon>0$, there exists a pure strategy $\sigma$ such that
\begin{equation*}
\m{E}^{k_1}_{\sigma}\left(\liminf_{n \rightarrow+\infty} \frac{1}{n} \sum_{m=1}^{n} g(k_m,i_m) \right) \geq v_{\infty}(k_1)-\epsilon.
\end{equation*}
\end{itemize}
Consequently, the MDP $\Gamma$ has a uniform value in pure strategies.
\end{theorem}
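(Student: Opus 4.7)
The plan is to deduce the theorem from the gambling-house result, Theorem~\ref{Dtheo1}, via an auxiliary gambling house that simulates the MDP. I would set $X:=K\times I$ (a compact metric space with the product metric), $r(k,i):=g(k,i)$, and
\[
F(k,i):=\{\mu\in\Delta(X)\,:\, \text{the first marginal of }\mu\text{ equals }q(k,i)\}.
\]
Continuity of $q$ shows that $F$ has closed graph, $q(k,i)\otimes\delta_i$ witnesses nonemptiness, and $r$ is continuous, so $\tilde\Gamma:=(X,F,r)$ is a gambling house in the sense of the paper. Choosing $\mu\in F(k_{m-1},i_{m-1})$ is equivalent to specifying a Markov kernel $K\to\Delta(I)$ that picks $i_m$ from the realisation of $k_m\sim q(k_{m-1},i_{m-1})$, so pure strategies in $\tilde\Gamma$ correspond precisely to behavior strategies of the MDP.

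To invoke Theorem~\ref{Dtheo1} I would verify that $\{v_n^{\tilde\Gamma}\}_{n\ge 1}$ is uniformly equicontinuous and that $w_\infty^{\tilde\Gamma}$ is continuous on $X$, using a synchronous coupling argument. Given $(k,i),(k',i')\in X$, I would couple $q(k,i)$ and $q(k',i')$ optimally and, on subsequent stages, let the two trajectories share the same action kernel so that the actions agree pathwise. The first step contributes a gap bounded by $d_{KR}(q(k,i),q(k',i'))$, which is small by joint continuity of $q$, and the $1$-Lipschitz property of each $q(\cdot,j)$ prevents this gap from growing along the coupled trajectories. Uniform continuity of $g$ then converts the state gap into a payoff gap uniformly in $n$ and in the strategy; applying the coupling to the $\liminf$ payoff handles $w_\infty^{\tilde\Gamma}$ analogously. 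Theorem~\ref{Dtheo1} then yields a pathwise uniform value for $\tilde\Gamma$ in pure strategies.

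Next I would translate back to the MDP and purify. By construction $v_n^{\tilde\Gamma}(k,i)=\int v_n(k')\,q(k,i)(dk')$, so the MDP Bellman identity $n v_n(k)=\sup_{i\in I}\{g(k,i)+(n-1)v_{n-1}^{\tilde\Gamma}(k,i)\}$ together with the uniform convergence of $v_n^{\tilde\Gamma}$ forces the uniform convergence of $v_n$ on $K$ to $v_\infty(k):=\sup_{i\in I}v_\infty^{\tilde\Gamma}(k,i)$. Given $k_1\in K$ and $\epsilon>0$, I would pick $i_1^*$ that is $\epsilon/2$-optimal for $i\mapsto v_\infty^{\tilde\Gamma}(k_1,i)$, play $i_1^*$ at the first MDP stage, and from stage two on follow a pathwise $\epsilon/2$-optimal pure strategy of $\tilde\Gamma$ started at $(k_1,i_1^*)$. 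Since Cesaro averages are insensitive to a single-stage shift, the resulting MDP behavior strategy is pathwise $\epsilon$-optimal. Feinberg's theorem (Theorem~5.2 of \cite{F96}), the same tool underlying Proposition~\ref{Dpurification}, then realises this behavior strategy as a distribution over MDP pure strategies with identical expected pathwise payoff, so some MDP pure strategy is pathwise $\epsilon$-optimal; Proposition~\ref{pathuni} upgrades this to the uniform value in pure strategies.

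The main obstacle I anticipate is verifying the equicontinuity and continuity hypotheses of Theorem~\ref{Dtheo1}. Since $q$ is only continuous (not Lipschitz) in the action variable, $\tilde\Gamma$ is not a $1$-Lipschitz gambling house and Theorem~\ref{D1Lips} cannot be invoked off the shelf. The coupling must absorb the non-Lipschitz action dependence at the first step only and then exploit the $1$-Lipschitz-in-state hypothesis to prevent blow-up along the trajectory, so as to extract a single modulus of continuity that works uniformly for every $v_n^{\tilde\Gamma}$ and for $w_\infty^{\tilde\Gamma}$; making this propagation genuinely uniform in $n$ is the technical heart of the argument.
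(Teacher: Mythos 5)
Your route is essentially the paper's: you reduce to Theorem \ref{Dtheo1} through an auxiliary gambling house whose state records the action, and you verify the equicontinuity/continuity hypotheses with the same synchronous coupling (identical actions, optimal coupling of $q(\cdot,i)$ in the state variable, supermartingale of state distances) that the paper isolates as Proposition \ref{Djunction_MDP}. The difference is the auxiliary state space. The paper takes $K\times I\times K$ with $F(k,i,k')=\{\delta_{k',i'}\otimes q(k',i'):i'\in I\}$, so a pure strategy of the auxiliary house is literally a pure strategy of the MDP, the $n$-stage values coincide exactly with $v_n(k_1)$, and the conclusion of Theorem \ref{Dtheo1} transfers verbatim. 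Your choice $X=K\times I$ with $F(k,i)=\{\mu:\mu_1=q(k,i)\}$ buys a convex closed-valued correspondence (so $\sco F=F$), but costs three extra steps that the paper's construction eliminates: (i) translating a pure house strategy back into an MDP behavior strategy requires disintegrating the chosen joint laws with respect to the realized state, measurably in the history --- this can be done with regular conditional probabilities exactly as in the paper's Proposition \ref{Djunction}, but it is precisely the kind of house-to-MDP measurability issue that the paper's Remark on the Renault--Venel construction \cite{RV12} warns about, so ``correspond precisely'' needs an explicit argument; (ii) the identity $v_n^{\widetilde{\Gamma}}(k,i)=\int_K v_n\,dq(k,i)$ and the Bellman step: the inequality $\leq$ is immediate, but $\geq$ needs a measurable pasting of $\epsilon$-optimal continuation strategies (or a finite-partition approximation using the common modulus of the $v_n$), which you should spell out; (iii) a final purification via Feinberg \cite{F96}, which the paper does not need for MDPs because its pure house strategies are already pure MDP strategies. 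None of these is a fatal gap --- each is standard and of the same nature as tools the paper develops (Lemma \ref{Dselec_couplage} for measurable optimal couplings, Proposition \ref{Dpurification} for purification) --- but they are the places where your sketch must be completed, whereas the paper's choice of $K\times I\times K$ was made exactly so that nothing of the sort is required.
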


\subsection{POMDPs} \label{subPOMDP}
A Partially Observable Markov Decision Process (POMDP) is a $5$-uple $\Gamma=(K,I,S,g,q)$, where $K$ is a finite set space, $I$ is a compact metric action set, $S$ is a finite signal set, $g:K \times I \rightarrow [0,1]$ is a continuous payoff function, and $q:K \times I \rightarrow \Delta(K \times S)$ is a continuous transition function. Given an initial distribution $p_1 \in \Delta(K)$, the POMDP $\Gamma(p_1)$ proceeds as follows. An initial state $k_1$ is drawn from $p_1$, and the decision-maker is not informed about it. At each stage $m \geq 1$, the decision-maker chooses $i_m \in I$, and gets the (unobserved) payoff $g(k_m,i_m)$. A pair $(k_{m+1},s_m)$ is drawn from $q(k_m,i_m)$, and the decision-maker receives the signal $s_m$. Then the game proceeds to stage $m+1$. 
A behavior strategy (resp. pure strategy) is a measurable map $\sigma : \cup_{m \geq 1} (I \times S)^{m-1} \rightarrow \Delta(I)$ (resp. $\sigma : \cup_{m \geq 1} (I \times S)^{m-1} \rightarrow I$). An initial distribution $p_1 \in \Delta(K)$ and a strategy $\sigma$ induce a probability measure $\m{P}^{p_1}_{\sigma}$ on the set of plays $H_{\infty}:=(K \times I \times S)^{\m{N}^*}$.

The notion of uniform value is defined in the same way as in gambling houses. We prove the following theorem:

\begin{theorem} \label{DthmPOMDP}
The POMDP $\Gamma$ has a pathwise uniform value in pure strategies, that is, for all $p_1 \in \Delta(K)$, the two following statements hold:
\begin{itemize}
\item
The sequence $(v_{n}(p_1))$ converges when $n$ goes to infinity to some real number $v_{\infty}(p_1)$.
\item
For all $\epsilon>0$, there exists a pure strategy $\sigma$ such that
\begin{equation*}
\m{E}^{p_1}_{\sigma}\left(\liminf_{n \rightarrow+\infty} \frac{1}{n} \sum_{m=1}^{n} g(k_m,i_m) \right) \geq v_{\infty}(p_1)-\epsilon.
\end{equation*}
\end{itemize}
Consequently, the POMDP $\Gamma$ has a uniform value in pure strategies.

\end{theorem}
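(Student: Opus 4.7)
The plan is to reduce the POMDP $\Gamma$ to an auxiliary gambling house $\tilde \Gamma$ on the belief space, and to invoke Theorem \ref{Dtheo1}. Since $K$ and $S$ are finite, the belief simplex $\Delta(K)$ is compact and metrizable by total variation. For $p \in \Delta(K)$, $i \in I$, $s \in S$, set $\bar{g}(p, i) := \sum_k p(k) g(k, i)$, $P(s \mid p, i) := \sum_{k,k'} p(k)\, q(k,i)(k', s)$, and let $\Phi(p, i, s) \in \Delta(K)$ denote the Bayesian update (extended arbitrarily to the zero set of $P(\cdot \mid p, i)$); these maps are continuous in $(p, i)$. I would then define $\tilde \Gamma = (\tilde X, \tilde F, \tilde r)$ with $\tilde X := \Delta(K) \times [0,1]$, $\tilde r(p, c) := c$, and
\[
\tilde F(p, c) := \Big\{\text{law of }\big(\Phi(p, i, s),\ \bar{g}(p, i)\big)\text{ with }s \sim P(\cdot\mid p, i)\ :\ i \in I\Big\}.
\]
The second coordinate of a successor state records the current stage's expected payoff, so that launching $\tilde \Gamma$ from $(p_1, 0)$ makes its stage-$m$ payoff coincide with the POMDP's stage-$m$ expected payoff under the analogous strategy. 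I would check closed graph and nonempty values of $\tilde F$ using compactness of $I$, finiteness of $S$, and the joint weak-$*$ continuity of $(p, i) \mapsto \sum_s P(s \mid p, i)\, \delta_{(\Phi(p,i,s),\ \bar{g}(p,i))}$; this continuity survives where $P(s \mid p, i) = 0$ because the mass in front of the (undefined) $\Phi(p, i, s)$ vanishes.

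Next I would verify the two hypotheses of Theorem \ref{Dtheo1} for $\tilde\Gamma$. A strategy $\sigma$ of $\Gamma$, viewed as a measurable map on signal/action histories, induces a measure $\m{P}^{p_1}_\sigma$ on plays that is affine in $p_1$ and whose total-variation norm is bounded by $\|p_1 - p_1'\|_1$. Since stage payoffs lie in $[0,1]$, both $p_1 \mapsto \gamma_n(p_1, \sigma)$ and $p_1 \mapsto \gamma_\infty(p_1, \sigma)$ are $1$-Lipschitz; taking suprema over $\sigma$ shows that $v_n$ is $1$-Lipschitz on $\Delta(K)$ uniformly in $n$, and $w_\infty$ is $1$-Lipschitz on $\Delta(K)$. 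Through the natural value-preserving correspondence between POMDP strategies and $\tilde\Gamma$ strategies (using the sufficient-statistic property of beliefs, so that $\tilde v_n((p, c)) = v_n(p)$ and $\tilde w_\infty((p, c)) = w_\infty(p)$), one then concludes that $\{\tilde v_n\}$ is uniformly equicontinuous and that $\tilde w_\infty$ is continuous on $\tilde X$.

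Applying Theorem \ref{Dtheo1} to $\tilde \Gamma$ yields a pathwise uniform value in pure strategies for $\tilde \Gamma$, and the same identification transfers this conclusion to $\Gamma$. The final sentence of the theorem (uniform value in pure strategies) then follows from Proposition \ref{pathuni}. The main obstacle, I expect, is the precise construction of $\tilde \Gamma$: verifying the closed-graph property of $\tilde F$ despite the singularities of Bayes' rule, and making rigorous the strategy identification, i.e., ensuring that restricting to $\tilde\Gamma$-strategies (functions of belief-and-payoff histories) does not shrink the set of achievable values of the POMDP, where strategies are allowed to depend on the raw signal/action histories.
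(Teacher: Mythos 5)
There is a genuine gap, and it is precisely the one the paper flags as the ``Problem'' in Section 4.3. Your reduction to the belief gambling house $\tilde\Gamma$ makes the stage payoff equal to $\bar g(p_m,i_m)=\m{E}\left[g(k_m,i_m)\mid p_m,i_m\right]$, not to the realized payoff $g(k_m,i_m)$. For the $n$-stage (expected Ces\`aro) criterion this is harmless, because taking expectations commutes with finite averages; this is why your argument does prove convergence of $v_n(p_1)$ and the existence of the \emph{uniform} value in pure strategies. But the pathwise criterion puts the $\liminf$ of the Ces\`aro means \emph{inside} the expectation, and there is no reason why
\begin{equation*}
\m{E}^{p_1}_{\sigma}\left(\liminf_{n \rightarrow+\infty} \frac{1}{n} \sum_{m=1}^{n} g(k_m,i_m) \right)
\quad\text{and}\quad
\m{E}^{x_0(p_1)}_{\widetilde{\sigma}}\left(\liminf_{n \rightarrow+\infty} \frac{1}{n} \sum_{m=1}^{n} \bar g(p_m,i_m) \right)
\end{equation*}
should coincide: the differences $g(k_m,i_m)-\bar g(p_m,i_m)$ are centered only conditionally on the \emph{observed} history, they are not adapted to it (they depend on the hidden state), so no martingale strong law lets you identify the two pathwise limits. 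Hence your claimed identification $\tilde w_\infty((p,c))=w_\infty(p)$, and more importantly the final step ``the same identification transfers this conclusion to $\Gamma$,'' are unjustified: a pathwise $\epsilon$-optimal strategy in $\tilde\Gamma$ need not be pathwise $\epsilon$-optimal in $\Gamma$. This is exactly why the paper states that Theorem \ref{DthmPOMDP} is \emph{not} a corollary of Theorem \ref{Dtheo1}.

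What the paper does instead is to redo the three lemmas of the proof of Theorem \ref{Dtheo1} directly for the POMDP. The invariant-measure lemma is obtained from the auxiliary gambling house (Lemma \ref{Dinvariant2}), but the ergodic step (Lemma \ref{Dbirkhoff2}) applies the pathwise ergodic theorem not to the belief chain $(p_m)$ but to the enlarged Markov chain $y_m=(k_m,i_m,p_m)$ on $K\times I\times\Delta(K)$, with the invariant measure $\nu^*$ built from $\mu^*$ by $\nu^*(\cdot\mid p)=p\otimes\sigma^*(p)$ and with the \emph{true} payoff $f(k,i,p)=g(k,i)$; this is the device that bridges the gap between $\bar g(p_m,i_m)$ and $g(k_m,i_m)$. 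The junction step (Lemma \ref{Djunction_opt2}) is also proved intrinsically, by observing that $\m{P}^{p}_{\sigma}(\cdot\mid k_1=k)=\m{P}^{p'}_{\sigma}(\cdot\mid k_1=k)$, which gives the $1$-Lipschitz transfer of the pathwise payoff in $\|p-p'\|_1$. If you want to salvage your write-up, keep your reduction for the asymptotic value and the $1$-Lipschitz estimates for $v_n$ and $w_\infty$, but replace the transfer step by an argument of this type on the chain $(k_m,i_m,p_m)$.
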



In particular, this theorem solves positively the open question mentioned in \cite{RSV02}, \cite{R11} and \cite{RV12}: finite POMDPs have a uniform value in pure strategies.  
\\




\section{Proof of Theorem \ref{Dtheo1}} \label{Dprooftheo1}

Let $\Gamma=(X,F,r)$ be a gambling house such that $\{v_n,n \geq 1\} \cup \{w_\infty\}$ is uniformly equicontinuous. Let $v:X \rightarrow [0,1]$ be defined by $v:=\limsup_{n \rightarrow+\infty} v_n$.

Let $x_0 \in X$ be an initial state. By Proposition \ref{Dpurification}, in order to prove Theorem $\ref{Dtheo1}$, it is sufficient to prove that for all $\epsilon>0$, there exists a behavior strategy $\sigma$ such that 
\begin{equation*}
\gamma_{\infty}(x_0,\sigma)=\m{E}^{x_0}_{\sigma}\left(\liminf_{n \rightarrow+\infty} \frac{1}{n} \sum_{m=1}^{n} r_m \right) \geq v(x_0)-\epsilon.
\end{equation*}
Let us first give the structure and the intuition of the proof. It builds on three main ideas, each of them corresponding to a lemma. 

First, Lemma \ref{Dinvariant} associates to $x_0$ a probability measure $\mu^* \in \Delta(X)$, such that:
\begin{itemize}
\item
Going from $x_0$, for all $\epsilon>0$ and $n_0 \in \m{N}^*$, there exists a strategy $\sigma_0$ and $n \geq n_0$ such that the occupation measure $\frac{1}{n} \sum_{m=1}^n z_m \in \Delta(X)$ is close to $\mu^*$ up to $\epsilon$ (for the KR distance).
\item
$\hat{r}(\mu^*)=\hat{v}(\mu^*)=v(x_0)$
\item
If the initial state is drawn according to $\mu^*$, the decision-maker has a behavior stationary strategy $\sigma^*$ such that for all $m \geq 1$, $z_m$ is distributed according to $\mu^*$ ($\mu^*$ is an invariant measure for the gambling house).
\end{itemize}
Let $x$ be in the support of $\mu^*$. Building on a pathwise ergodic theorem, Lemma \ref{Dbirkhoff} shows that 
\begin{equation*}
\frac{1}{n} \sum_{m=1}^{n} r_m \rightarrow v(x) \quad \m{P}_{ \sigma^*}^x \ \text{a.s.}
\end{equation*}
Let $y \in X$ be close to $x$. Lemma \ref{Djunction_opt} shows that, if $y \in X$ is close to $x$, then there exists a behavior strategy $\sigma$ such that $\gamma_{\infty}(y,\sigma)$ is close to $v(y)$. 
\\

These lemmas are put together in the following way. Lemma \ref{Dinvariant} implies that, going from $x_0$, the decision-maker has a strategy $\sigma_0$ such that there exists a (deterministic) stage $m \geq 1$ such that with high probability, the state $x_m$ is close to the support of $\mu^*$, and such that the expectation of $v(x_m)$ is close to $v(x_0)$. Let $x$ be an element in the support of $\mu^*$ such that $x_m$ is close to $x$. By Lemma \ref{Djunction_opt}, going from $x_m$, the decision-maker has a strategy $\sigma$ such that $\gamma_{\infty}(x_m,\sigma)$ is close to $v(x_m)$. Let $\widetilde{\sigma}$ be the strategy that plays $\sigma_0$ until stage $m$, then switches to $\sigma$. Then $\gamma_{\infty}(x_0,\widetilde{\sigma})$ is close to $v(x_0)$, which concludes the proof of Theorem \ref{Dtheo1}.

\subsection{Preliminary results} \label{Dpreliminary}

Let $\Gamma=(X,F,r)$ be a gambling house. We define a relaxed version of the gambling house, in order to obtain a deterministic convex gambling house $H: \Delta(X) \rightrightarrows \Delta(X)$. The interpretation of $H(z)$ is the following: if the initial state is drawn according to $z$, $H(z)$ is the set of all possible measures on the next state that the decision-maker can generate by using behavior strategies.

 First, we define $G: X \rightrightarrows \Delta(X)$ by
\begin{align*}
\forall x \in X \quad G(x) & := {\rm \sco}(F(x)).
\end{align*}

By \cite[Theorem 17.35, p.573]{hitchhiker}, the correspondence $G$ has a closed graph, which is denoted by $\graph G$. Note that a behavior strategy in the gambling house $\Gamma$ corresponds to a pure strategy in the gambling house $(X,G,r)$.
For every $z\in \Delta(X)$, we define $H(z)$ by
\begin{align*}
H(z) & :=\left\{ \vphantom{\int_{\Delta(X)}} \mu \in \Delta(X) \ | \ \exists \ \sigma:X \rightarrow \Delta(X) \ \text{measurable} \ \text{s.t.} \ \forall x\in X, \ \sigma(x)\in G(x) \ \text{and} \	\right. \\
 & \hspace{5mm} \left. \forall f \in \mathcal{C}(X,[0,1]), \ \hat{f}(\mu)=\int_{X} \hat{f}(\sigma(x)) z(dx) \right\}.
\end{align*}
Note that replacing ``$\forall x\in X, \ \sigma(x)\in G(x)$'' by ``$\forall x\in X, \ \sigma(x)\in G(x) \ z-a.s. $'' does not change the above definition (throughout the paper, ``a.s.'' stands for ``almost surely'').

By Proposition \ref{Dselection}, $H$ has nonempty values. We now check that the correspondence $H$ has a closed graph.

\begin{proposition} \label{Dhclosed}
The correspondence $H$ has a closed graph.
\end{proposition}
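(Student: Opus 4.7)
The plan is to prove closedness sequentially: fix $z_n \to z$ in $\Delta(X)$ and $\mu_n \to \mu$ in $\Delta(X)$ with $\mu_n \in H(z_n)$, and show that $\mu \in H(z)$. For each $n$, pick a witness $\sigma_n : X \to \Delta(X)$, measurable, with $\sigma_n(x) \in G(x)$ for all $x$ and $\hat{f}(\mu_n) = \int_X \hat{f}(\sigma_n(x))\, z_n(dx)$ for every $f \in \mathcal{C}(X,[0,1])$. Encode $\sigma_n$ as a probability measure $\pi_n \in \Delta(X \times \Delta(X))$ defined as the pushforward of $z_n$ under $x \mapsto (x,\sigma_n(x))$. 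Since $\sigma_n$ selects from $G$, $\pi_n$ is concentrated on $\graph G$, which is closed (by \cite[Theorem 17.35]{hitchhiker}, as recalled in the preliminary subsection) and hence compact inside the compact metric space $X \times \Delta(X)$.

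Next, because $\Delta(\graph G)$ is weak-$*$ compact, I would extract a subsequence (not relabelled) converging to some $\pi \in \Delta(\graph G)$. Two continuity arguments then pin down $\pi$. First, the first-marginal map is weak-$*$ continuous, so the first marginal of $\pi$ is $\lim z_n = z$. Second, for any $f \in \mathcal{C}(X,[0,1])$, the map $(x,y) \mapsto \hat{f}(y)$ is continuous on $X \times \Delta(X)$ (since $\hat{f}$ is continuous on $\Delta(X)$ for $d_{KR}$), so
\[
\int_{X \times \Delta(X)} \hat{f}(y)\, \pi_n(dx,dy) \longrightarrow \int_{X \times \Delta(X)} \hat{f}(y)\, \pi(dx,dy).
\]
The left-hand side equals $\hat{f}(\mu_n) \to \hat{f}(\mu)$ by construction, so we obtain $\hat{f}(\mu) = \int_{X \times \Delta(X)} \hat{f}(y)\, \pi(dx,dy)$.

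It remains to convert $\pi$ into an honest measurable selection. Since $X$ and $\Delta(X)$ are Polish, disintegrate $\pi$ with respect to its first marginal $z$: there exists a measurable kernel $x \mapsto \pi_x \in \Delta(\Delta(X))$ with $\pi(dx,dy) = z(dx)\, \pi_x(dy)$, and $\pi_x$ is concentrated on $G(x)$ for $z$-a.e.\ $x$. Set $\sigma(x) := \bary(\pi_x)$; this is measurable in $x$ because $\bary$ is weak-$*$ continuous. Since $G(x) = \sco F(x)$ is closed and convex, the barycenter of a probability measure supported on $G(x)$ belongs to $G(x)$, so $\sigma(x) \in G(x)$ for $z$-a.e.\ $x$. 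Using Proposition \ref{Dselection} on $G$, I would redefine $\sigma$ on the exceptional $z$-null set to enforce $\sigma(x) \in G(x)$ for every $x$, which by the remark following the definition of $H$ does not change the integrals. Finally, applying the defining property of the barycenter to each $f \in \mathcal{C}(X,[0,1])$ gives
\[
\int_X \hat{f}(\sigma(x))\, z(dx) = \int_X \!\int_{\Delta(X)} \hat{f}(y)\, \pi_x(dy)\, z(dx) = \int_{X \times \Delta(X)} \hat{f}(y)\, \pi(dx,dy) = \hat{f}(\mu),
\]
so $\mu \in H(z)$, as required.

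The main obstacle I expect is the disintegration/selection step: one must produce a genuine measurable selection of $G$ (not merely a measurable transition kernel), and verify that averaging by $\bary$ both preserves membership in $G(x)$ and reproduces the integral equation. The convexity of $G(x) = \sco F(x)$ and the continuity of $\bary$ are what make this step go through cleanly.
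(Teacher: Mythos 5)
Your argument is correct and follows essentially the same route as the paper's proof: encode each witness selector $\sigma_n$ as a measure $\pi_n \in \Delta(\graph G)$, extract a weak-$*$ limit $\pi$, identify its first marginal with $z$, pass the integral identity to the limit, disintegrate, and take barycenters of the kernel to recover a selector of $G$. You even make explicit a couple of points the paper leaves implicit (that the barycenter stays in $G(x)$ because $G(x)=\sco F(x)$ is closed and convex, and the harmless redefinition on a $z$-null set), so there is nothing to add.
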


\begin{proof}
Let $(z_n,\mu_n)_{n \in \m{N}} \in (\graph H)^{\m{N}}$ such that $(z_n,\mu_n)_{n \in \m{N}}$ converges to some $(z,\mu) \in \Delta(X) \times \Delta(X)$. 
Let us show that $\mu \in H(z)$. For this, we construct $\sigma:X \rightarrow \Delta(X)$ associated to $\mu$ in the definition of $H(z)$. \\

By definition of $H$, for every $n\in \m{N}$, there exists $\sigma_n:X\rightarrow \Delta(X)$ a measurable selector of $G$ such that 
for every $f\in \mathcal{C}(X,[0,1])$,
\[
\hat{f}(\mu_n)=\int_{X} \hat{f}(\sigma_n(x))z_n(dx).
\]
Let $\pi_n \in \Delta(\graph G)$ such that the first marginal of $\pi_n$ is $z_n$, and the conditional distribution of $\pi_n$ knowing $x \in X$ is $\delta_{\sigma_n(x)} \in \Delta(\Delta(X))$.
By definition, for every $f\in \mathcal{C}(X,[0,1])$, we have
\begin{eqnarray*}
\int_{X\times \Delta(X)} \hat{f}(p) \pi_n(dx,dp)&=& \int_{X} \left( \int_{\Delta(X)} \hat{f}(p) \delta_{\sigma_n(x)}(dp) \right) z_n(dx)
\\
&=& \int_X \hat{f}(\sigma_n(x)) z_n(dx)
\\
&=& \hat{f}(\mu_n).
\end{eqnarray*}

The set $\Delta(\graph G)$ is compact, thus there exists $\pi$ a limit point of the sequence $(\pi_n)_{n \in \m{N}}$. By definition of the weak* topology on $\Delta(X)$ and on $\Delta(\graph G)$, the previous equation yields
\begin{equation} \label{Detage}
\int_{X\times \Delta(X)} \hat{f}(p) \pi(dx,dp)=\hat{f}(\mu).
\end{equation}

To conclude, let us disintegrate $\pi$. Let $z'$ be the first marginal of $\pi$. The sets $X$ and $\Delta(X)$ are compact metric spaces, thus there exists a probability kernel $K: X \times \mathcal{B}(\Delta(X)) \rightarrow [0,1]$ such that
\begin{itemize}
\item for every $x\in X$, $K(x,.) \in \Delta(\Delta(X))$,
\item for every $B \in \mathcal{B}(\Delta(X))$, $K(.,B)$ is measurable,
\item for every $h\in \mathcal{C}(X\times \Delta(X),[0,1])$,
\begin{equation} \label{Ddefdis}
\int_{X\times \Delta(X)} h(x,p) \pi(dx,dp)= \int_{X} \left( \int_{\Delta(X)} h(x,p) K(x,dp) \right) z'(dx).
\end{equation}
\end{itemize}
Note that the second condition is equivalent to: ``The mapping $x \rightarrow K(x,.)$ is measurable'' (see \cite[Proposition 7.26, p.134]{bertsekas}). 
For every $n \geq 1$, the first marginal of $\pi_n$ is equal to $z_n$ that converges to $z$, thus $z'=z$.
Define a measurable mapping $\sigma: X \rightarrow \Delta(X)$ by $\sigma(x):=\bary(K(x,.)) \in \Delta(X)$. Because $\pi \in \Delta(\graph G)$, we have $\sigma(x) \in G(x) \ z-\text{a.s}$. Let $f \in \mathcal{C}(X,[0,1])$. Using successively (\ref{Detage}) and (\ref{Ddefdis}) yield
\begin{eqnarray*}
\hat{f}(\mu)&=&\int_{X\times \Delta(X)} \hat{f}(p) \pi(dx,dp)
\\
&=& \int_X \left(\int_{\Delta(X)} \hat{f}(p) K(x,dp) \right) z(dx)
\\
&=&\int_X \hat{f}(\sigma(x)) z(dx).
\end{eqnarray*}
Thus, $\mu \in H(z)$, and $H$ has a closed graph.
\end{proof}

Let $\mu,\mu' \in \Delta(X)$. Denote $\lambda \cdot \mu+(1-\lambda) \cdot \mu'$ the probability measure $\mu'' \in \Delta(X)$ such that for all $f\in \mathcal{C}(X,[0,1])$,
\begin{equation*}
\hat{f}(\mu'')=\lambda \hat{f}(\mu)+(1-\lambda) \hat{f}(\mu').
\end{equation*}
For $(\mu_m)_{m \in \m{N}^*} \in \Delta(X)^{\m{N}^*}$ and $n \in \m{N}^*$, the measure $\displaystyle \frac{1}{n} \sum_{m=1}^n \mu_m$ is defined in a similar way. 

\begin{proposition} \label{Dlinear}
The correspondence $H$ is linear on $\Delta(X)$:
\[
\forall z,z' \in \Delta(X), \ \forall \lambda \in[0,1], \ H(\lambda \cdot z +(1-\lambda) \cdot z')=\lambda \cdot H(z)+ (1-\lambda) \cdot H(z').
\]
\end{proposition}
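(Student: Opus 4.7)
The plan is to prove the two inclusions separately, with the forward one essentially immediate and the reverse requiring a Radon-Nikodym gluing argument.

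For $H(\lambda \cdot z + (1-\lambda) \cdot z') \subseteq \lambda \cdot H(z) + (1-\lambda) \cdot H(z')$, I would take $\mu$ in the left-hand side, together with a measurable selector $\sigma$ of $G$ such that $\hat{f}(\mu) = \int_X \hat{f}(\sigma(x)) (\lambda z + (1-\lambda)z')(dx)$ for every $f \in \mathcal{C}(X,[0,1])$. Define $\mu_1, \mu_2 \in \Delta(X)$ by $\hat{f}(\mu_1) := \int_X \hat{f}(\sigma(x)) z(dx)$ and $\hat{f}(\mu_2) := \int_X \hat{f}(\sigma(x)) z'(dx)$. Then the same $\sigma$ witnesses both $\mu_1 \in H(z)$ and $\mu_2 \in H(z')$, and linearity of the integral yields $\mu = \lambda \cdot \mu_1 + (1-\lambda) \cdot \mu_2$.

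For the reverse inclusion, suppose $\mu = \lambda \cdot \mu_1 + (1-\lambda) \cdot \mu_2$ with $\mu_i \in H(z_i)$ witnessed by measurable selectors $\sigma_i$ of $G$, where $z_1 := z$, $z_2 := z'$. Set $\tau := \lambda \cdot z + (1-\lambda) \cdot z'$; since both $\lambda z$ and $(1-\lambda) z'$ are absolutely continuous with respect to $\tau$, one can pick measurable Radon-Nikodym densities $p := d(\lambda z)/d\tau$ and $q := d((1-\lambda) z')/d\tau$, and by uniqueness $p(x) + q(x) = 1$ for $\tau$-almost every $x$. Define $\sigma(x) := p(x) \sigma_1(x) + q(x) \sigma_2(x) \in \Delta(X)$. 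Then $\sigma$ is measurable, and since $G(x) = \sco F(x)$ is convex, $\sigma(x) \in G(x)$ for $\tau$-almost every $x$; the remark following the definition of $H$ then allows us to use $\sigma$ as a legitimate witness. Unwinding the Radon-Nikodym decomposition yields $\int_X \hat{f}(\sigma(x)) \tau(dx) = \lambda \hat{f}(\mu_1) + (1-\lambda) \hat{f}(\mu_2) = \hat{f}(\mu)$ for every $f \in \mathcal{C}(X,[0,1])$, establishing $\mu \in H(\tau)$.

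The main obstacle, and the reason the reverse inclusion is non-trivial, is that the two witnesses $\sigma_1$ and $\sigma_2$ are \emph{a priori} different and one cannot simply pick one of them; the Radon-Nikodym weights $p(x)$ and $q(x)$ encode how much each underlying measure contributes to $\tau$ at $x$, and convexity of $G$, built into the construction $G = \sco F$, is exactly what makes the pointwise convex combination $\sigma(x)$ a valid element of $G(x)$.
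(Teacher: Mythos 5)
Your proof is correct, and on the hard (reverse) inclusion it takes a somewhat more hands-on route than the paper. The paper lifts each pair $(z_i,\sigma_i)$ to a measure $\pi_i$ on $X\times\Delta(X)$ supported on $\graph G$, forms the mixture $\pi''=\lambda\cdot\pi+(1-\lambda)\cdot\pi'$, and obtains the new selector by disintegrating $\pi''$ with respect to its first marginal (reusing the disintegration-plus-barycenter machinery already set up in the proof that $H$ has a closed graph). Your Radon--Nikodym construction produces exactly the barycenter of that disintegration kernel, namely $\sigma(x)=p(x)\sigma_1(x)+q(x)\sigma_2(x)$ with $p=d(\lambda z)/d\tau$, $q=d((1-\lambda)z')/d\tau$; so the two arguments build the same selector, but yours does it explicitly and more elementarily, avoiding the disintegration theorem at the cost of two small verifications that you assert rather than spell out: measurability of $x\mapsto p(x)\sigma_1(x)+q(x)\sigma_2(x)$ as a $\Delta(X)$-valued map (immediate since the Borel structure on $\Delta(X)$ is generated by the maps $\nu\mapsto\hat f(\nu)$), and redefining $\sigma$ (say as $\sigma_1$) on the $\tau$-null set where $p+q\neq 1$ so that it is everywhere $\Delta(X)$-valued, with the remark after the definition of $H$ covering the almost-sure selection. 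Your explicit appeal to convexity of $G(x)=\sco F(x)$ is the same ingredient the paper uses implicitly when taking barycenters; your treatment of the easy inclusion (splitting one selector's integral along $z$ and $z'$) is the standard argument the paper declares immediate.
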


\begin{proof}
Let $z,z'\in \Delta(X)$ and $\lambda \in[0,1]$, then the inclusion
\[
\ H(\lambda \cdot z +(1-\lambda) \cdot z') \subset \lambda \cdot H(z)+ (1-\lambda) \cdot H(z')
\]
is immediate. We now prove the converse inclusion. Let $\mu \in \lambda \cdot H(z)+ (1-\lambda) \cdot H(z')$. By definition, there exists $\sigma:X\rightarrow \Delta(X)$ and $\sigma':X\rightarrow \Delta(X)$ two measurable selectors of $G$ such that for every $f\in \mathcal{C}(X,[0,1])$,
\begin{equation*}
\hat{f}(\mu)=\lambda \int_{X} \hat{f}(\sigma(x)) z(dx) +(1-\lambda) \int_{X} \hat{f}(\sigma'(x)) z'(dx).
\end{equation*}
Denote by $\pi$ (resp. $\pi'$), the probability distribution on $X\times \Delta(X) $ generated by $z$ and $\sigma$ (resp. $z'$ and $\sigma'$). Let $\pi'':=\lambda \cdot \pi + (1-\lambda) \cdot \pi'$, then $\pi''$ is a probability on $X\times \Delta(X)$ such that $\pi''(\graph(G))=1$, and the marginal on $X$ is $\lambda \cdot z +(1-\lambda) \cdot z'$. Let $\sigma'':X \rightarrow \Delta(X)$ given by the disintegration of $\pi''$ with respect to the first coordinate. Let $f\in \mathcal{C}(X,[0,1])$.
As in the proof of Proposition \ref{Dhclosed} (see Equation (\ref{Detage})), we have
\begin{eqnarray*}
\hat{f}(\mu)&=&\lambda \int_{X \times \Delta(X)} \hat{f}(p) \pi(dx,dp) +(1-\lambda) \int_{X \times \Delta(X)} \hat{f}(p) \pi'(dx,dp)
\\
&=& \int_{X \times \Delta(X)} \hat{f}(p) \pi''(dx,dp)
\\
&=&
\int_{X} \hat{f}(\sigma''(x)) z(dx),
\end{eqnarray*}
thus $\mu \in H(\lambda \cdot z+(1-\lambda) \cdot z')$.

\end{proof}

\subsection{Invariant measure}
The first lemma associates a fixed point of the correspondence $H$ to each initial state:


\begin{lemma} \label{Dinvariant}
Let $x_0 \in X$.
There exists a distribution $\mu^* \in \Delta(X)$ such that
\begin{itemize}
\item $\mu^*$ is $H$-invariant: $\mu^* \in H(\mu^*)$,
\item for every $\varepsilon>0$ and $N\geq 1$, there exists a (pure) strategy $\sigma_0$ and $n \geq N$ such that $\sigma$ is $0$-optimal in $\Gamma_n(x_0)$, $v_{n}(x_0) \geq v(x_0)-\epsilon$ and 
\[
d_{KR}\left(\frac{1}{n} \sum_{m=1}^n z_m(x_0,\sigma), \mu^* \right) \leq \varepsilon,
\]
where $z_m(x_0,\sigma_0) \in \Delta(X)$ is the distribution of $x_m$, the state at stage $m$, given the initial state $x_0$ and the strategy $\sigma_0$.
\item $\hat{r}(\mu^*)=\hat{v}(\mu^*)=v(x_0)$.
\end{itemize}
\end{lemma}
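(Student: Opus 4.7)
My plan is to take $\mu^*$ to be a weak-$*$ accumulation point of the occupation measures of pure $0$-optimal $n_k$-stage strategies along a subsequence realising $\limsup_n v_n(x_0)$. Concretely, pick $n_k \uparrow +\infty$ with $v_{n_k}(x_0) \to v(x_0)$; for each $k$, invoke the Feinberg-type remark preceding Remark~\ref{DextensionF} to choose a pure $0$-optimal strategy $\sigma_k \in \Sigma_p$ in $\Gamma_{n_k}(x_0)$; let $z_{k,m} \in \Delta(X)$ denote the law of $x_m$ under $\m{P}^{x_0}_{\sigma_k}$; and set $\mu_k := \tfrac{1}{n_k}\sum_{m=1}^{n_k} z_{k,m}$. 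Since $(\Delta(X),d_{KR})$ is compact, extract a further subsequence along which $\mu_k \to \mu^*$. The second bullet is then immediate: for $\varepsilon,N$ given, any sufficiently large $k$ satisfies $n_k \geq N$, $v_{n_k}(x_0) \geq v(x_0)-\varepsilon$ and $d_{KR}(\mu_k,\mu^*) \leq \varepsilon$, so one puts $\sigma_0 := \sigma_k$, $n := n_k$. The identity $\hat{r}(\mu^*)=v(x_0)$ follows at once: $\hat r(\mu_k)=\gamma_{n_k}(x_0,\sigma_k)=v_{n_k}(x_0)$ by construction and $0$-optimality, and continuity of $r$ makes $\hat r$ continuous on $\Delta(X)$, so one passes to the limit.

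For the invariance $\mu^* \in H(\mu^*)$, I would first prove the one-step inclusion $z_{k,m+1} \in H(z_{k,m})$. Disintegrating the pure action $\sigma_k(h_{m+1}) \in F(x_m)$ with respect to $x_m$ under $\m{P}^{x_0}_{\sigma_k}$ produces a measurable kernel $\bar{\sigma}_{k,m}:X\to\Delta(X)$, $\bar{\sigma}_{k,m}(x):=\E\bigl[\sigma_k(h_{m+1})\mid x_m=x\bigr]$, whose values lie in $\sco F(x)=G(x)$ for $z_{k,m}$-a.e.\ $x$ (a.s.-convex combinations of elements of $F(x)$), with $z_{k,m+1}$ equal to the $z_{k,m}$-average of $\bar{\sigma}_{k,m}$. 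This is the defining property of $H$. Linearity of $H$ (Proposition~\ref{Dlinear}) then gives $\tfrac{1}{n_k}\sum_{m=1}^{n_k} z_{k,m+1} \in H(\mu_k)$; the shifted average differs from $\mu_k$ by $\tfrac{1}{n_k}(z_{k,n_k+1}-z_{k,1})$, which vanishes in $d_{KR}$. Passing to the limit and invoking closedness of $\graph H$ (Proposition~\ref{Dhclosed}) yields $\mu^* \in H(\mu^*)$.

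The final identity $\hat v(\mu^*)=v(x_0)$ is the technical core. By uniform equicontinuity of $\{v_n\}$ and Arzel\`a-Ascoli, I pass to a further subsequence (still indexed by $k$) along which $v_{n_k}$ converges uniformly on $X$ to some $v^*\in\mathcal{C}(X,[0,1])$, automatically with $v^*(x_0)=v(x_0)$ and $v^*\leq v$. The elementary a priori bound $|v_{n+1}-v_n| \leq 1/(n+1)$ (immediate from the $n$-stage dynamic programming) also forces $v_{n_k-1}\to v^*$ uniformly, so passing to the limit in
\[
v_n(x)=\sup_{z\in G(x)}\Bigl[\tfrac{1}{n}\hat r(z)+\tfrac{n-1}{n}\hat v_{n-1}(z)\Bigr]
\]
yields the Bellman identity $v^*(x)=\sup_{z\in G(x)}\hat{v^*}(z)$, hence $\hat{v^*}(z)\leq v^*(x)$ whenever $z\in G(x)$. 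Specialised to the chain under $\sigma_k$, this propagates to $\hat{v^*}(z_{k,m+1})\leq \hat{v^*}(z_{k,m})\leq \hat{v^*}(z_{k,1})\leq v^*(x_0)=v(x_0)$ (the first inequality via $\bar{\sigma}_{k,m}(x)\in G(x)$, the last since $z_{k,1}=\sigma_k(x_0)\in F(x_0)\subset G(x_0)$). Averaging in $m$ and letting $k\to\infty$ gives $\hat{v^*}(\mu^*)\leq v(x_0)$. On the other hand, Proposition~\ref{Dselection} applied to the correspondence $x\mapsto\{z\in G(x): z$ is the conditional kernel realising $\mu^*\in H(\mu^*)\}$ produces a stationary measurable selector $\sigma^*$ of $G$ along which the law of every $x_m$ starting from $\mu^*$ is $\mu^*$; thus $\hat v_n(\mu^*)\geq \hat r(\mu^*)=v(x_0)$ for every $n$, and reverse Fatou yields $\hat v(\mu^*)\geq v(x_0)$. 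The main obstacle is closing the gap between $v^*$ and $v$, i.e.\ upgrading $\hat{v^*}(\mu^*)\leq v(x_0)$ to $\hat v(\mu^*)\leq v(x_0)$; I expect this to follow by showing $v^*=v$ on $\supp\mu^*$, using the equality case of the Bellman relation propagated along $\sigma^*$-orbits together with the equicontinuity of $\{v_n\}$ to control oscillations of the full sequence on the orbit.
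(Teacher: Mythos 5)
Most of your construction tracks the paper's own proof: taking $\mu^*$ as a weak$^*$ limit of occupation measures of pure $0$-optimal $n_k$-stage strategies, the one-step inclusion $z_{k,m+1}\in H(z_{k,m})$ combined with linearity of $H$ and closedness of $\graph H$ for invariance, the identity $\hat r(\mu^*)=v(x_0)$, and the lower bound $\hat v(\mu^*)\ge \hat r(\mu^*)$ via the stationary selector coming from $\mu^*\in H(\mu^*)$ (you do not even need to re-invoke Proposition \ref{Dselection} there: the selector is part of the definition of $H$). The genuine gap is the one you flag yourself: the upper bound $\hat v(\mu^*)\le v(x_0)$. Your Arzel\`a--Ascoli detour only yields $\widehat{v^*}(\mu^*)\le v(x_0)$ for a subsequential uniform limit $v^*\le v$, which is the wrong direction (it follows from, rather than implies, the desired inequality), and the proposed repair --- showing $v^*=v$ on $\supp\mu^*$ through the equality case of the Bellman relation along $\sigma^*$-orbits --- is neither carried out nor obviously true; as written the third bullet of the lemma is not established.

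The missing step has a short direct proof that makes the detour through $v^*$ unnecessary: $v=\limsup_n v_n$ is itself superharmonic for $G$. Indeed, for $z\in G(x)$ the dynamic programming inequality gives $v_n(x)\ge \frac{1}{n}\hat r(z)+\frac{n-1}{n}\hat v_{n-1}(z)\ge \frac{n-1}{n}\hat v_{n-1}(z)$, and since $0\le v_n\le 1$, the reverse Fatou lemma gives $\limsup_n \hat v_{n-1}(z)\ge \hat v(z)$, hence $v(x)\ge \hat v(z)$. Uniform equicontinuity of $\{v_n\}$ makes $v$ continuous, so this pointwise bound can be integrated against the kernel defining $H$: if $\mu'\in H(\mu)$ then $\hat v(\mu')\le \hat v(\mu)$. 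Propagating this along the chain under $\sigma_k$ exactly as you did with $v^*$ (starting from $z_{k,1}\in G(x_0)$, so $\hat v(z_{k,1})\le v(x_0)$) gives $\hat v(\mu_k)\le v(x_0)$, and continuity of $\hat v$ on $\Delta(X)$ passes this to the limit $\hat v(\mu^*)\le v(x_0)$. This is precisely the paper's ``$v$ is decreasing in expectation along trajectories'' step; with it, your proof closes and coincides in substance with the paper's.
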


\begin{proof}
The proof builds on the same ideas as in Renault and Venel \cite[Proposition 3.24, p. 28]{RV12}.
Let $n \in \m{N}^*$ and $\sigma_0$ be a pure optimal strategy in the $n$-stage problem $\Gamma_n(x_0)$. 

Let 
\[
z_n:=\frac{1}{n}\sum_{m=1}^n z_m(x_0,\sigma_0),
\]
and
\[
z'_n:=\frac{1}{n}\sum_{m=2}^{n+1} z_m(x_0,\sigma_0).
\]

By construction, for every $m\in \{1,2,...,n\}$, $z_{m+1}(x_0,\sigma_0) \in H(z_m(x_0,\sigma_0))$, therefore by linearity of $H$ (see Proposition \ref{Dlinear})
\[
z_n' \in H(z_n).
\]
Moreover, we have
\begin{equation} \label{Dsamelimit}
d_{KR}(z_n,z'_n) \leq \frac{2}{n} \diam(X),
\end{equation}
where $\diam(X)$ is the diameter of $X$.

The set  $\Delta(X)$ is compact. Up to taking a subsequence, there exists $\mu^* \in \Delta(X)$ such that $(v_n(x_0))$ converges to $v(x_0)$ and $(z_n)$ converges to $\mu^*$. By inequality (\ref{Dsamelimit}), $(z_n')$ also converges to $\mu^*$. Because $H$ has a closed graph, we have $\mu^* \in H(\mu^*)$, and $\mu^*$ is $H$-invariant.  By construction, the second property is immediate.\\

\noindent Finally, we have a series of inequalities that imply the third property.

\begin{itemize}
\item $v$ is decreasing in expectation along trajectories: the sequence 
\\
$(\hat{v}(z_m(x_0,\sigma_0)))_{m\geq 1}$ is decreasing, thus for every $n\geq 1$,
\[
v(x_0) \geq \frac{1}{n} \sum_{m=1}^n \hat{v}(z_m(x_0,\sigma_0)) = \hat{v}(z_n).
\]
Taking $n$ to infinity, by continuity of $\hat{v}$, we obtain that $v(x_0) \geq \hat{v}(\mu^*).$

\item We showed that $\mu^* \in H(\mu^*)$. Let $\sigma^*:X \rightarrow \Delta(X)$ be the corresponding measurable selector of $G$. Let us consider the gambling house $\Gamma(\mu^*)$, where the initial state is drawn from $\mu^*$ and announced to the decision-maker (see Remark \ref{DextensionF}). The map $\sigma^*$ is a stationary strategy in $\Gamma(\mu^*)$, and for all $m \geq 1$, $z_m(\mu^*,\sigma^*)=\mu^*$. Consequently, for  
all $n \in \m{N}^*$, the strategy $\sigma^*$ guarantees $\hat{r}(\mu^*)$ in $\Gamma_n(\mu^*)$. Thus, we have
\[
\hat{v}(\mu^*) \geq \hat{r}(\mu^*).
\]
\item By construction, the payoff is linear on $\Delta(X)$ and $\hat{r}(z_n)=v_n(x_0)$. By continuity of $\hat{r}$, taking $n$ to infinity, we obtain
\[
\hat{r}(\mu^*) = v(x_0).
\]
\end{itemize}
\end{proof}
In the next section, we prove that in $\Gamma(\mu^*)$, under the strategy $\sigma^*$, the average payoffs converge almost surely to $v(x)$, where $x$ is the initial (random) state. 

\subsection{Pathwise ergodic theorem}
We recall here the ergodic theorem in Hern\'{a}ndez-Lerma and Lasserre \cite[Theorem 2.5.1, p. 37]{lasserre03}. 
\begin{theorem}[pathwise ergodic theorem] \label{Dergodic}
Let $(X,\mathcal{B})$ be a measurable space, and $\xi$ be a Markov chain on $(X,\mathcal{B})$, with transition probability function $P$. Let $\mu$ be an invariant probability measure for $P$. For every $f$ an integrable function with respect to $\mu$, there exist a set $B_f \in \mathcal{B}$ and a function $f^*$ integrable with respect to $\mu$, such that $\mu(B_f)=1$, and for all $x \in B_f$,
\begin{equation*}
\frac{1}{n} \sum_{m=1}^{n} f(\xi_m) \rightarrow f^*(\xi_0) \quad P_x-a.s.
\end{equation*}
Moreover,
\begin{equation*}
\int_X f^*(x) \mu(dx)=\int_X f(x) \mu(dx).
\end{equation*}
\end{theorem}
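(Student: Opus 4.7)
The plan is to lift the Markov chain to a measure-preserving dynamical system on its path space and apply Birkhoff's classical pointwise ergodic theorem. Concretely, I would work on $\Omega := X^{\m{N}}$ equipped with the product $\sigma$-algebra, the shift $T \colon (\omega_0,\omega_1,\ldots) \mapsto (\omega_1,\omega_2,\ldots)$, and the probability measure $P_\mu := \int_X P_x \, \mu(dx)$, which is the law of the Markov chain started from $\mu$. Since $\mu$ is $P$-invariant, $P_\mu$ is $T$-invariant, so Birkhoff's theorem applied to the coordinate functional $F(\omega) := f(\omega_0)$ (which is $P_\mu$-integrable because $E_{P_\mu}[|F|] = \int |f|\, d\mu$) yields a $T$-invariant, $P_\mu$-integrable $F^* \colon \Omega \to \m{R}$ such that $n^{-1} \sum_{m=0}^{n-1} f(\xi_m) \to F^*$ $P_\mu$-a.s., with $E_{P_\mu}[F^*] = \int_X f \, d\mu$. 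The reindexing $\sum_{m=1}^n$ versus $\sum_{m=0}^{n-1}$ changes the Cesàro average by at most $n^{-1}(|f(\xi_0)| + |f(\xi_n)|)$, which is a.s.\ negligible for $\mu$-integrable $f$.

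Since $P_\mu = \int_X P_x \, \mu(dx)$, Fubini then guarantees that the $P_\mu$-full-measure event of convergence remains of full $P_x$-measure for $x$ in some set $B_f \subset X$ with $\mu(B_f) = 1$. The remaining step, which I expect to be the delicate one, is to show that $F^*$ can be taken of the form $f^*(\xi_0)$ for a measurable $f^* \colon X \to \m{R}$. Since $F^*$ is $T$-invariant it is measurable with respect to the tail $\sigma$-algebra $\bigcap_n \sigma(\xi_n, \xi_{n+1}, \ldots)$, and a reverse martingale argument applied to $E_{P_x}[F^* \mid \xi_0, \ldots, \xi_n]$ combined with the Markov property shows that under each $P_x$ the limit $F^*$ equals the constant $f^*(x) := E_{P_x}[F^*]$ almost surely. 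Measurability of $f^*$ follows from a monotone class argument, and integrating $E_{P_x}[F^*] = f^*(x)$ against $\mu$ yields $\int_X f^* \, d\mu = E_{P_\mu}[F^*] = \int_X f \, d\mu$.

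The main obstacle is precisely this last reduction from a general $T$-invariant limit $F^*$ on path space to a function $f^*(\xi_0)$ of the initial coordinate alone. Without invoking the Markov property in the form of a zero–one principle for tail events under each $P_x$, Birkhoff's theorem only produces an invariant functional depending on the entire future of the chain; it is the Markov structure that collapses this functional to a deterministic value under each $P_x$, and thereby defines the measurable map $f^* \colon X \to \m{R}$ appearing in the statement.
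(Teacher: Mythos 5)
The paper itself does not prove this statement: it is quoted verbatim from Hern\'andez-Lerma and Lasserre (Theorem 2.5.1), so your proposal can only be judged against the standard proof, whose overall architecture (path space, shift, $P_\mu:=\int_X P_x\,\mu(dx)$, Birkhoff, then disintegration over the initial state) you reproduce correctly. The Birkhoff step, the reindexing remark, and the Fubini step are all fine.

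The gap is exactly at the step you flag as delicate, and your justification of it does not work. You claim that tail-measurability of $F^*$ plus the Markov property and a reverse-martingale argument show that ``under each $P_x$'' the limit $F^*$ is a.s.\ equal to the constant $E_{P_x}[F^*]$. This is false as stated: the shift-invariant (or tail) $\sigma$-field is in general not $P_x$-trivial. Take $X=\{a,b,c\}$ with $b,c$ absorbing and $a$ moving to $b$ or $c$ with probability $1/2$ each, and $f=\mathbf{1}_{\{b\}}$: under $P_a$ the Ces\`aro limit is $1$ or $0$ with probability $1/2$ each, so no per-$x$ zero--one principle is available. The theorem survives only because such $x$ are $\mu$-null, so any correct argument must use the invariance of $\mu$ at this precise point --- your argument never does. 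The standard repair is the classical fact that for a \emph{stationary} Markov chain the shift-invariant $\sigma$-field is contained, modulo $P_\mu$-null sets, in $\sigma(\xi_0)$: set $h(y):=E_{P_y}[F^*]$; shift-invariance of $F^*$ and the Markov property give $E_{P_\mu}[F^*\mid \xi_0,\dots,\xi_n]=h(\xi_n)$ (a forward, not reverse, martingale), so $E_{P_\mu}\left|F^*-h(\xi_n)\right|\to 0$ by martingale convergence; on the other hand, since $F^*=F^*\circ T^n$ and $P_\mu$ is shift-invariant, $E_{P_\mu}\left|F^*-h(\xi_n)\right|=E_{P_\mu}\left|F^*-h(\xi_0)\right|$ for every $n$, whence $F^*=h(\xi_0)$ $P_\mu$-a.s. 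Only after this identification does your Fubini step yield a set $B_f$ with $\mu(B_f)=1$ on which, under $P_x$, the averages converge $P_x$-a.s.\ to the constant $f^*(x):=h(x)$, and $\int_X f^*\,d\mu=E_{P_\mu}[F^*]=\int_X f\,d\mu$. With this substitution (and with ``each $P_x$'' weakened to ``$\mu$-a.e.\ $x$'') your outline becomes the standard proof; as written, the key reduction is asserted on grounds that are genuinely wrong.
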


%

\begin{lemma}\label{Dbirkhoff}
Let $x_0 \in X$ and $\mu^* \in \Delta(X)$ be the corresponding invariant measure (see Lemma \ref{Dinvariant}).
There exist a measurable set $B \subset \Delta(X)$ such that $\mu^*(B)=1$ and a stationary strategy $\sigma^*: X \rightarrow \Delta(X)$ such that for all $x \in B$,
\begin{equation*}
\frac{1}{n} \sum_{m=1}^{n} r_m \rightarrow v(x) \quad \m{P}^x_{\sigma^*}-a.s.
\end{equation*}
\end{lemma}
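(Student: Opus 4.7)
The plan is to construct the stationary strategy $\sigma^*$ from the invariance property $\mu^* \in H(\mu^*)$ given by Lemma \ref{Dinvariant}, apply the pathwise ergodic theorem to the induced Markov chain, and then identify the almost-sure limit with $v$ by combining a direct upper bound with the identities $\hat{r}(\mu^*)=\hat{v}(\mu^*)=v(x_0)$. Concretely, $\mu^*\in H(\mu^*)$ provides a measurable map $\sigma^*:X\to \Delta(X)$ with $\sigma^*(x)\in G(x)$ for every $x\in X$ and
\[
\hat{f}(\mu^*)=\int_{X}\hat{f}(\sigma^*(x))\,\mu^*(dx),\qquad \forall f\in \mathcal{C}(X,[0,1]).
\]
I view $\sigma^*$ as a behavior stationary strategy; under $\m{P}^x_{\sigma^*}$ the state sequence $(x_m)_{m\ge 0}$ is a Markov chain with transition kernel $P(x,\cdot):=\sigma^*(x)$, and the displayed identity says exactly that $\mu^*$ is $P$-invariant.

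Applying Theorem \ref{Dergodic} to the bounded (hence $\mu^*$-integrable) function $r$ then produces a measurable set $B\subset X$ with $\mu^*(B)=1$ and a $\mu^*$-integrable function $r^*$ such that, for every $x\in B$,
\begin{equation*}
\frac{1}{n}\sum_{m=1}^{n} r_m \longrightarrow r^*(x) \quad \m{P}^x_{\sigma^*}\text{-a.s.},
\end{equation*}
with $\int_X r^*\, d\mu^* = \int_X r\, d\mu^* = \hat{r}(\mu^*) = v(x_0)$ by Lemma \ref{Dinvariant}.

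It remains to show that $r^*=v$ on a $\mu^*$-full-measure subset of $B$. The pointwise upper bound comes from bounded convergence combined with $\gamma_n(x,\sigma^*)\le v_n(x)$: for every $x\in B$,
\begin{equation*}
r^*(x)=\lim_{n\to+\infty}\m{E}^x_{\sigma^*}\!\left[\frac{1}{n}\sum_{m=1}^{n} r_m\right]=\lim_{n\to+\infty}\gamma_n(x,\sigma^*)\le \liminf_{n\to+\infty} v_n(x)\le v(x).
\end{equation*}
For the matching of integrals, Lemma \ref{Dinvariant} gives $\int_X v\, d\mu^* = \hat{v}(\mu^*) = v(x_0) = \int_X r^*\, d\mu^*$. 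Since $v-r^*\ge 0$ $\mu^*$-a.s.\ and integrates to zero, we conclude $r^*=v$ $\mu^*$-a.s.; intersecting $B$ with the corresponding full-measure set yields the lemma.

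The main difficulty is precisely this last identification step. In isolation the ergodic theorem only produces some conditional expectation $r^*$ of $r$ with respect to the invariant $\sigma$-algebra, and there is a priori no reason that this tail quantity should coincide with $v$. The matching works here only because of the specific construction of $\mu^*$ in Lemma \ref{Dinvariant}: both $r$ and $v$ have the same expectation $v(x_0)$ under $\mu^*$, which, combined with the one-sided inequality $r^*\le v$ obtained above, forces equality almost everywhere.
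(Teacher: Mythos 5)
Your proposal is correct and follows essentially the same route as the paper: extract the stationary selector $\sigma^*$ from $\mu^*\in H(\mu^*)$, apply the pathwise ergodic theorem to $r$ along the induced Markov chain, bound the limit function above by $v$ via bounded convergence and $\gamma_n(x,\sigma^*)\le v_n(x)$, and force equality $\mu^*$-a.s.\ from $\hat r(\mu^*)=\hat v(\mu^*)$. The only cosmetic difference is that you spell out the ``nonnegative function with zero integral'' step, which the paper leaves implicit.
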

\begin{proof}
Because $\mu^*$ is a fixed point of $H$, there exists  $\sigma^*:X\rightarrow \Delta(X)$ a measurable selector of $G$ (thus, a behavior stationary strategy in $\Gamma$) such that for all $f \in \mathcal{C}(X,[0,1])$,
\[
\hat{f}(\mu^*)=\int_{X} \hat{f}(\sigma^*(x)) \mu^*(dx).
\]
Consider the gambling house $\Gamma(\mu^*)$. 
Under $\sigma^*$, the sequence of states
$(x_m)_{m \in \m{N}}$ is a Markov chain with invariant measure $\mu^*$. From Theorem \ref{Dergodic}, there exist a measurable set $B_0 \subset X$ such that $\mu^*(B_0)=1$, and a measurable map $w:X \rightarrow [0,1]$ such that for all $x \in B_0$, we have
\begin{equation*}
\frac{1}{n} \sum_{m=1}^{n} r(x_m) \underset{n \rightarrow +\infty}{\rightarrow} w(x) \quad \m{P}^x_{\sigma^*}-\text{almost  surely,}
\end{equation*}
and
\begin{equation*}
\hat{w}(\mu^*)=\hat{r}(\mu^*).
\end{equation*}

\noindent We now prove that $w=v \quad \m{P}^{\mu^*}_{\sigma^*}-\text{a.s.}$. First, we prove that 
$w \leq v \quad \m{P}^{\mu^*}_{\sigma^*}-\text{a.s.}$. Let $x \in B_0$. Using first the dominated convergence theorem, then the definition of $v_n(x)$, we have
\begin{eqnarray*}
w(x)&=&\m{E}^x_{\sigma^*} \left( \lim_{n\rightarrow +\infty} \frac{1}{n} \sum_{m=1}^n r(x_m)\right)
\\
&=& \lim_{n\rightarrow +\infty} \m{E}^x_{\sigma^*} \left(  \frac{1}{n} \sum_{m=1}^n r(x_m)\right)
\\
&\leq& \limsup_{n\rightarrow +\infty} v_n(x)= v(x).
\end{eqnarray*}
Moreover, we know by Lemma \ref{Dinvariant} that $\hat{r}(\mu^*)=\hat{v}(\mu^*)$, therefore 
\[
\hat{w}(\mu^*) =\hat{r}(\mu^*) =\hat{v}(\mu^*).
\]
This implies that $w=v \quad \m{P}^{\mu^*}_{\sigma^*}-\text{a.s.}$, and the lemma is  proved. 
\end{proof}

\subsection{Junction lemma} \label{Djunction_section}

{\color{black}
By assumption, $\{v_n, n\geq 1\}\cup \{w_\infty\}$ is uniformly equicontinuous. Therefore, there exists an increasing modulus of continuity $\eta: \m{R}_{+} \rightarrow \m{R}_{+}$ such that
\[
\forall x,y\in X, \ |w_\infty(x)-w_\infty(y)| \leq \eta(d(x,y)),
\]
and for all $n\geq 1$,
\[
\forall x,y\in X, \ |v_n(x)-v_n(y)| \leq \eta(d(x,y)).
\]
Then, $v$ is also uniformly continuous with the same modulus of continuity.

\begin{lemma}\label{Djunction_opt}
Let $\varepsilon>0$, $x,y \in X$ and $\sigma^*$ be a strategy such that
\begin{equation*}
\frac{1}{n} \sum_{m=1}^{n} r_m \rightarrow v(x) \quad \m{P}^x_{\sigma^*} \ a.s.
\end{equation*}
Then there exists a strategy $\sigma$ such that 
\begin{equation*}
\m{E}^{y}_\sigma\left(\liminf_{n \rightarrow+\infty} \frac{1}{n} \sum_{m=1}^{n} r_m \right) \geq  v(y)-2\eta(d(x,y))-\varepsilon.
\end{equation*}
\end{lemma}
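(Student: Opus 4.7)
The plan is to sidestep any explicit construction of a strategy from $y$ that imitates $\sigma^*$. The key observation will be that the hypothesis on $\sigma^*$ already delivers a lower bound on $w_\infty(x)$, after which only the equicontinuity of $w_\infty$ and $v$ is needed. Concretely, I would proceed in three short steps: first, convert the pathwise guarantee at $x$ into $w_\infty(x) \geq v(x)$; second, transfer this inequality to $y$ via the modulus $\eta$; third, extract the desired strategy $\sigma$ from the supremum defining $w_\infty(y)$. I do not anticipate any substantive obstacle; the entire content of the lemma is packed into the first step, and even that step is almost immediate.

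For Step 1, the almost sure convergence $\frac{1}{n}\sum_{m=1}^n r_m \to v(x)$ under $\m{P}^x_{\sigma^*}$ implies $\liminf_n \frac{1}{n}\sum_{m=1}^n r_m = v(x)$ almost surely, and since the Cesaro averages lie in $[0,1]$, taking expectations gives $\gamma_\infty(x, \sigma^*) = v(x)$; hence $w_\infty(x) \geq \gamma_\infty(x,\sigma^*) = v(x)$.

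For Step 2, I will use that both $w_\infty$ and $v$ admit $\eta$ as a uniform modulus of continuity to chain
\begin{equation*}
w_\infty(y) \geq w_\infty(x) - \eta(d(x,y)) \geq v(x) - \eta(d(x,y)) \geq v(y) - 2\eta(d(x,y)).
\end{equation*}

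For Step 3, by the very definition $w_\infty(y) = \sup_{\sigma \in \Sigma} \gamma_\infty(y, \sigma)$, I can pick $\sigma \in \Sigma$ with $\gamma_\infty(y, \sigma) \geq w_\infty(y) - \varepsilon$, and combining this with the chain above yields the required lower bound $v(y) - 2\eta(d(x,y)) - \varepsilon$. Note that by Proposition \ref{Dpurification} such a $\sigma$ may even be chosen pure, although the lemma statement does not demand it.
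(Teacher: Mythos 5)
Your proposal is correct and follows essentially the same argument as the paper: establish $w_\infty(x)\geq v(x)$ from the pathwise guarantee of $\sigma^*$, transfer to $y$ using the common modulus $\eta$ of $w_\infty$ and $v$, and take an $\varepsilon$-optimal strategy for $w_\infty(y)$. The only cosmetic difference is that the paper also notes $w_\infty(x)\leq v(x)$ via Fatou to get equality, which your chain rightly observes is not needed.
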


\begin{proof}
By assumption, we have
\[
\m{E}^{x}_{\sigma^*}\left(\liminf_{n \rightarrow+\infty} \frac{1}{n} \sum_{m=1}^{n} r_m \right)=\m{E}^{x}_{\sigma^*}\left( v(x) \right)=v(x),
\]
therefore $v(x) \geq w_\infty(x)$. Moreover, by Fatou's lemma, $w_\infty(x)\leq v(x)$. Thus, $w_\infty(x) = v(x)$. \\

Let $\varepsilon>0$. By definition of $w_\infty(y)$, there exists a strategy $\sigma$ such that

\begin{align*}
\m{E}^{y}_\sigma\left(\liminf_{n \rightarrow+\infty} \frac{1}{n} \sum_{m=1}^{n} r_m \right) & \geq  w_\infty(y)-\varepsilon, \\
& \geq w_{\infty}(x)-\eta(d(x,y))-\varepsilon, \\
& = v(x)-\eta(d(x,y))-\varepsilon, \\
& \geq v(y)-2\eta(d(x,y))-\varepsilon.
\end{align*}

\end{proof}

}

We can now finish the proof of Theorem \ref{Dtheo1}. 
\subsection{Conclusion of the proof}\label{concl}
\begin{proof}[Proof of Theorem \ref{Dtheo1}]
We can now put Lemma \ref{Dinvariant}, \ref{Dbirkhoff} and \ref{Djunction_opt} together to finish the proof of Theorem \ref{Dtheo1}. Fix an initial state $x_0 \in X$ and $\epsilon>0$. 
We will define a strategy $\widetilde{\sigma}$ as follows: start by following a strategy $\sigma_0$ until some stage $n_3$, then switch to another strategy depending on the state $x_{n_3}$. We first define the stage $n_3$, then build the strategy $\widetilde{\sigma}$ and finally check that this strategy indeed guarantees a good long-run average payoff. \\

{\color{black} By assumption, the family $(v_n)_{n \geq 1}$ is uniformly equicontinuous. Consequently, there exists $n_0 \in \m{N}^*$ such that for all $n \geq n_0$ and for all $x \in X,$
\begin{equation*}
v_n(x) \leq v(x)+\epsilon.
\end{equation*}}

We first consider Lemma \ref{Dinvariant} for $x_0$, $\epsilon'=\epsilon^3$ and $N=2 n_0$. There exists $\mu^*$ an invariant measure, $\sigma_0$ a (pure) strategy and $n_1\geq 2 n_0$ such that
$\mu^*$ satisfies the conclusion of Lemma \ref{Dinvariant} and
\begin{equation*}
d_{KR} \left(\frac{1}{n_1} \sum_{m=1}^{n_1} z_m(x_0,\sigma_0) ,\mu^* \right) \leq \epsilon^3.
\end{equation*}

Let $B$ be given by Lemma \ref{Dbirkhoff}. In general, there is no hope to prove the existence of a stage $m$ such that $z_m(x_0,\sigma_0)$ is close to $\mu^*$. Instead, we prove the existence of a stage $n_3$ such that under the strategy $\sigma_0$, $x_{n_3}$ is with high probability close to $B$, and $v(z_{n_3}(x_0,\sigma_0))$ is close to $v(x_0)$.


Let $n_2=\lfloor \epsilon n_1 \rfloor+1$, $A=\left\{x \in X|d(x,B) \leq \varepsilon \right\}$ and $A^c=\left\{x \in X|d(x,B) > \varepsilon \right\}$. We denote $\mu_{n_1}= \frac{1}{n_1} \sum_{m=1}^{n_1} z_m(x_0,\sigma_0)$. By property of the KR distance, there exists a coupling $\gamma \in \Delta(X \times X)$ such that the first marginal of $\gamma$ is $\mu_{n_1}$, the second marginal is $\mu^*$, and
\begin{eqnarray*}
d_{KR}(\mu_{n_1},\mu^*) & = & \int_{X^2} d(x,x') \gamma(dx,dx'). 
\end{eqnarray*}
By definition of $A$, for all $(x,x') \in A^c \times B$, we have $d(x,x')> \epsilon$. Thus, Markov inequality yields
\begin{eqnarray*}
\int_{X^2} d(x,x') \gamma(dx,dx') &\geq& \epsilon \gamma(A^c \times B)
\\
& =&  \varepsilon \mu_{n_1}(A^c).
\end{eqnarray*}
We deduce that $\mu_{n_1}(A^c)\leq \varepsilon^2$.
\noindent Because the $n_2$ first stages have a weight of order $\varepsilon$ in $\mu_{n_1}$, we deduce the existence of a stage $m$ such that $z_{m}(A^c) \leq \epsilon$:
\begin{eqnarray*}
\mu_{n_1}(A^c)&=&\frac{1}{n_1} \sum_{m=1}^{n_1} z_m(A^c)
\\
&=&\frac{1}{n_1} \sum_{m=1}^{n_2} z_m(A^c)+\frac{1}{n_1} \sum_{m=n_2+1}^{n_1} z_m(A^c)
\\
&\geq& \epsilon \min_{1 \leq m \leq n_2} z_m(A^c),
\end{eqnarray*}
and thus
\begin{align} \label{Dtoto_1}
z_{n_3}(A^c):=\min_{1 \leq m \leq n_2} z_m(A^c) \leq \epsilon.
\end{align}

\noindent Moreover, $\hat{v}(z_{n_3}(x_0,\sigma_0))$ is greater than $v(x_0)$ up to a margin $\varepsilon$. Indeed we have
\begin{align*}
\hat{v}(z_{n_3}(x_0,\sigma_0)) & \geq v_{n_1-n_3+1}(z_{n_3}(x_0,\sigma_0))- \varepsilon \\
       & \geq v_{n_1}(x_0)-\frac{n_3-1}{n_1}-\varepsilon \\
        & \geq v(x_0)-2\varepsilon-\varepsilon.\\
        & \geq v(x_0)-3\varepsilon.
\end{align*}

\noindent Using Equation (\ref{Dtoto_1}) and the last inequality, we deduce that 
\begin{equation*}
\m{E}^{x_0}_{\sigma_0}(1_{A} v(x_{n_3})) \geq \m{E}^{x_0}_{\sigma_0}(v(x_{n_3}))-z_{n_3} (A^c) \geq v(x_0)-4 \varepsilon.
\end{equation*}

We have defined both the initial strategy $\sigma_0$ and the switching stage $n_3$. To conclude, we use Lemma \ref{Djunction_opt} in order to define the strategy from stage $n_3$. Note that in Lemma \ref{Djunction_opt}, we did not prove  that the strategy $\sigma$ could be selected in a measurable way with respect to the state. Thus, we need to use a finite approximation.  The set $X$ is a compact metric set, thus there exists a partition $\{\mathcal{P}^1,...,\mathcal{P}^L\}$ of $X$ such that for every $l\in \{1,...,L\}$, $\mathcal{P}^l$ is measurable and $\diam(P^l) \leq \epsilon$. It follows that there exists a finite subset $\{x^1,...,x^L\}$ of $B$ such that for every $x\in A \cap \mathcal{P}^l$, $d(x,x^l)\leq 3 \varepsilon$. We denote by $\psi$ the application which associates to every $x\in A \cap \mathcal{P}^l$ the state $x^l$. 

We define the strategy $\widetilde{\sigma}$ as follows:
\begin{itemize}
\item Play $\sigma_0$ until stage $n_3$. 
\item If $x_{n_3} \in A$, then there exists $l \in \left\{1,...,L \right\}$ such that $x_{n_3} \in \mathcal{P}^l$. Play the strategy given by Lemma \ref{Djunction_opt}, with $x=x^l$ and $y=x_{n_3}$. If $x_{n_3} \notin A$, play any strategy.
\end{itemize}

Let us check that the strategy $\widetilde{\sigma}$ guarantees a good payoff with respect to the long-run average payoff criterion. By definition, we have
\begin{align*}
\gamma_{\infty}(x_0,\widetilde{\sigma}) &= \m{E}^{x_0}_{\widetilde{\sigma}} \left(\liminf_{n \rightarrow+\infty} \frac{1}{n} \sum_{m=1}^{n} r_m \right) \\
&= \m{E}^{x_0}_{\widetilde{\sigma}} \left( \m{E}^{x_{0}}_{\widetilde{\sigma}} \left(\liminf_{n \rightarrow+\infty} \frac{1}{n} \sum_{m=1}^{n} r_m \middle| x_{n_3} \right) \right) \\
 & \geq \m{E}^{x_0}_{\sigma_0} \left( [v(x_{n_3})-2\eta(d(x_{n_3},\psi(x_{n_3})))- \varepsilon]1_{A}\right) \\
&\geq v(x_0)- 5 \varepsilon-2\eta(3\varepsilon).
\end{align*}
Because $\eta(0)=0$ and $\eta$ is continuous at $0$, the gambling house $\Gamma(x_0)$ has a pathwise uniform value, and Theorem \ref{Dtheo1} is proved. 
\end{proof}



\section{Proofs of Theorem \ref{D1Lips}, Theorem \ref{DthmMDP} and Theorem \ref{DthmPOMDP}} \label{DproofPOMDP}

This section is dedicated to the proofs of Theorem \ref{D1Lips}, Theorem \ref{DthmMDP} and Theorem \ref{DthmPOMDP}. Theorem \ref{D1Lips} and Theorem \ref{DthmMDP} stem from Theorem \ref{Dtheo1}. Theorem \ref{DthmPOMDP} is not a corollary of Theorem \ref{Dtheo1}. Indeed, applying Theorem \ref{Dtheo1} to the framework POMDPs, would only yield the existence of the uniform value in pure strategies and not the existence of the pathwise uniform value.



\subsection{Proof of Theorem \ref{D1Lips}}
Let $\Gamma:=(X,F,r)$ be a gambling house such that $F$ is 1-Lipschitz. Without loss of generality, we can assume that $r$ is 1-Lipschitz. Indeed, any continuous payoff function can be uniformly approximated by Lipschitz payoff functions, and dividing the payoff function by a constant does not change the decision problem.

In order to prove Theorem \ref{D1Lips}, 
it is sufficient to prove that for all $n \geq 1$, $v_n$ is 1-Lipschitz, and $w_{\infty}$ is 1-Lipschitz. Indeed, it implies that the family $\{v_n,n \geq 1\}$ is uniformly equicontinuous and $w_{\infty}$ is continuous. Theorem \ref{D1Lips} then stems from Theorem \ref{Dtheo1}.

Recall that $G:X \rightrightarrows \Delta(X)$ is defined for all $x \in X$ by $G(x):=\sco F(x)$. 
{\color{black}
\begin{lemma}\label{DG_lipschitz}
The correspondence $G$ is $1$-Lipschitz.
\end{lemma}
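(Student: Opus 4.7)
The plan is to show directly from the definition that for any $x,y \in X$ and any $u \in G(x)$, we can construct $w \in G(y)$ with $d_{KR}(u,w) \leq d(x,y)$. The key will be to lift the Lipschitz selection property of $F$ to the level of barycenters via a measurable selection argument.

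First I would fix $x,y \in X$ and $u \in G(x) = \sco F(x)$, and write $u = \bary(\nu)$ for some $\nu \in \Delta(F(x))$. Next, I would introduce the correspondence $\Phi : F(x) \rightrightarrows F(y)$ defined by
\[
\Phi(u') := \bigl\{ w' \in F(y) \,:\, d_{KR}(u',w') \leq d(x,y) \bigr\}.
\]
The $1$-Lipschitz property of $F$ guarantees that $\Phi$ has nonempty values. Since $F$ has a closed graph, $F(x)$ and $F(y)$ are closed subsets of the compact metric space $\Delta(X)$, and since $d_{KR}$ is continuous, a short check shows that $\Phi$ has a closed graph. Proposition \ref{Dselection} then furnishes a measurable selector $\phi : F(x) \to F(y)$ satisfying $d_{KR}(u',\phi(u')) \leq d(x,y)$ for all $u' \in F(x)$.

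Then I would define $\nu' := \phi_{*}\nu \in \Delta(F(y))$ (the pushforward of $\nu$ by $\phi$) and set $w := \bary(\nu') \in G(y)$. To show $d_{KR}(u,w) \leq d(x,y)$, I would use the dual formulation of $d_{KR}$. For any $1$-Lipschitz $f \in E_1$, by the change of variables formula and the definition of the barycenter,
\[
\hat{f}(u)-\hat{f}(w)=\int_{F(x)} \hat{f}(u')\,\nu(du') - \int_{F(y)} \hat{f}(w')\,\nu'(dw') = \int_{F(x)} \bigl[\hat{f}(u')-\hat{f}(\phi(u'))\bigr]\,\nu(du').
\]
Since $f$ is $1$-Lipschitz, the dual representation of $d_{KR}$ yields $|\hat{f}(u')-\hat{f}(\phi(u'))| \leq d_{KR}(u',\phi(u')) \leq d(x,y)$, so $|\hat{f}(u)-\hat{f}(w)| \leq d(x,y)$. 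Taking the supremum over $f \in E_1$ gives $d_{KR}(u,w) \leq d(x,y)$, which proves that $G$ is $1$-Lipschitz.

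The main obstacle I foresee is the measurable selection step: one must verify that $\Phi$ has closed graph (hence satisfies the hypotheses of Proposition \ref{Dselection}) in order to legitimately produce $\phi$, since a naive pointwise choice is not sufficient for constructing the pushforward measure $\nu'$. Once the measurable $\phi$ is in hand, everything else follows cleanly from the Kantorovich--Rubinstein duality, and in particular one does not need to manipulate couplings on $X \times X$ directly.
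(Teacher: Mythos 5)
Your proof is correct and follows essentially the same route as the paper's: the same correspondence $\Phi(u') = \{w' \in F(y) : d_{KR}(u',w') \leq d(x,y)\}$, the same appeal to Proposition \ref{Dselection} for a measurable selector, the same pushforward measure and barycenter construction, and the same Kantorovich--Rubinstein duality estimate over $f \in E_1$. No substantive differences to report.
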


%


\begin{proof}
Let $x$ and $x'$ be two states in $X$. Fix $\mu\in G(x)$. Let us show that there exists $\mu' \in G(x')$ such that $d_{KR}(\mu,\mu') \leq  d(x,x')$.\\

\noindent By definition of $G(x)$, there exists $\nu \in \Delta(F(x))$ such that
for all $g\in \mathcal{C}(X,[0,1])$, 
\begin{equation*}
\hat{g}(\mu)=\int_{\Delta(X)} \hat{g}(z) \nu(dz).
\end{equation*}
\noindent Let $M=F(x) \subset \Delta(X)$. We consider the correspondence $\Phi:  M \rightrightarrows \Delta(X)$ defined for $z \in M$ by  
\begin{equation*}
\Phi(z):=\{z' \in F(x') \ | \ d_{KR}(z,z')\leq  d(x,x')\}.
\end{equation*}
Because $F$ is $1$-Lipschitz, $\Phi$ has nonempty values. Moreover, $\Phi$ is the intersection of two correspondences with a closed graph, therefore it is a correspondence with a closed graph. Applying Proposition \ref{Dselection}, we deduce that $\Phi$ has a measurable selector $\phi: M \rightarrow \Delta(X)$.


%

Let $\nu' \in \Delta(\Delta(X))$ be the image measure of $\nu$ by $\phi$. Throughout the paper, we use the following notation for image measures:
\begin{equation*}
\nu':=\nu \circ \phi^{-1}.
\end{equation*}
By construction, $\nu'(F(x'))=1$ and for all $h\in \mathcal{C}(\Delta(X),[0,1])$, 
\[
\int_{\Delta(X)} h(\phi(z))\nu(dz)=\int_{\Delta(X)} h(u)  \nu'(du).
\]
Let $\mu':=\bary(\nu')$ and $f\in E_1$. The function $\hat{f}$ is $1$-Lipschitz, and 
\begin{align*}
\left|\hat{f}(\mu)-\hat{f}(\mu') \right| & = \left| \int_{\Delta(X)} \hat{f}(z)\nu(dz)- \int_{\Delta(X)} \hat{f}(u)\nu'(du) \right| \\
 &  = \left| \int_{\Delta(X)} \hat{f}(z)\nu(dz)- \int_{\Delta(X)} \hat{f}(\phi(z))\nu(dz) \right| \\
 &  \leq \int_{\Delta(X)} \left|  \hat{f}(z)-  \hat{f}(\phi(z)) \right| \nu(dz) \\
& \leq  d(x,x').
\end{align*}

\end{proof}
Because $G$ is $1$-Lipschitz, given $(x,u) \in \graph G$ and $y \in X$, there exists $w \in G(y)$ such that $d_{KR}(u,w) \leq d(x,y)$. For our purpose, we need that the optimal coupling between $u$ and $w$ can be selected in a measurable way. This is the aim of the following lemma:
\begin{lemma}\label{Dselec_couplage}
There exists a measurable mapping $\psi: \graph G \times X \rightarrow \Delta(X \times X)$ such that for all $(x,u) \in \graph G$, for all $y\in X$,
\begin{itemize}
\item the first marginal of  $\psi(x,u,y)$  \text{ is } $u$,
\item the second marginal of $\psi(x,u,y)$ \text{ is in } $G(y)$,
\item $\displaystyle \int_{X\times X} d(s,t)\psi(x,u,y)(ds,dt) \leq d(x,y)$.
\end{itemize}
\end{lemma}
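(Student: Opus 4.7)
The plan is to exhibit $\psi$ as a measurable selector of the natural correspondence
\[
\Psi(x,u,y) := \left\{\pi \in \Delta(X \times X) : \mathrm{marg}_1 \pi = u,\ \mathrm{marg}_2 \pi \in G(y),\ \int_{X \times X} d(s,t)\, \pi(ds,dt) \leq d(x,y)\right\}
\]
from $\graph G \times X$ to $\Delta(X \times X)$, and then invoke Proposition \ref{Dselection}. The domain is a closed subset of the compact metric space $X \times \Delta(X) \times X$ (since $G$ has closed graph) and the target $\Delta(X \times X)$ is compact metric, so the task reduces to checking that $\Psi$ has nonempty values and closed graph; any measurable selector of $\Psi$ automatically satisfies the three bulleted conditions of the lemma.

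Nonemptiness: fix $(x,u,y) \in \graph G \times X$. Since $G$ is $1$-Lipschitz by Lemma \ref{DG_lipschitz}, there exists $w \in G(y)$ with $d_{KR}(u,w) \leq d(x,y)$. The dual expression for $d_{KR}$ recalled at the start of Section~1.1 states that the infimum over couplings is attained because $X$ is compact, so some $\pi \in \Pi(u,w)$ realises
\[
\int_{X \times X} d(s,t)\, \pi(ds,dt) = d_{KR}(u,w) \leq d(x,y),
\]
whence $\pi \in \Psi(x,u,y)$.

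Closed graph: suppose $(x_n,u_n,y_n) \to (x,u,y)$ in $\graph G \times X$ and $\pi_n \to \pi$ in $\Delta(X \times X)$ with $\pi_n \in \Psi(x_n,u_n,y_n)$. The two marginal projections $\Delta(X \times X) \to \Delta(X)$ are weak$^*$ continuous, so $\mathrm{marg}_1 \pi = \lim u_n = u$, and $w_n := \mathrm{marg}_2 \pi_n$ converges to $w := \mathrm{marg}_2 \pi$; since $(y_n,w_n) \in \graph G$ and $G$ has closed graph, $w \in G(y)$. The function $d$ is continuous (hence bounded) on the compact set $X \times X$, so weak$^*$ convergence gives $\int d\, d\pi_n \to \int d\, d\pi$, and passing to the limit in $\int d\, d\pi_n \leq d(x_n,y_n)$ yields the third constraint. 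Proposition \ref{Dselection} then supplies the desired measurable selector $\psi$. The only mildly delicate step is the stability of the constraint $\mathrm{marg}_2 \pi \in G(y)$ under limits, which reduces immediately to the already established closed-graph property of $G$.
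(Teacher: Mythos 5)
Your proof is correct, and it takes a recognizably different route from the paper's. The paper defines the correspondence $\Xi(x,u,y)=\{\pi\in\Delta(X\times X): \pi_1=u,\ \pi_2\in G(y)\}$ \emph{without} the cost constraint, and then invokes the measurable maximum theorem (\cite[Theorem 18.19]{hitchhiker}) to select measurably a minimizer of $\pi\mapsto\int d\,d\pi$ over $\Xi(x,u,y)$; the bound $\int d\,d\psi\leq d(x,y)$ then follows because the minimum equals $\min_{w\in G(y)}d_{KR}(u,w)\leq d(x,y)$ by the $1$-Lipschitz property of $G$ (Lemma \ref{DG_lipschitz}). You instead fold the constraint $\int d\,d\pi\leq d(x,y)$ into the correspondence itself and apply only the plain selection result already stated as Proposition \ref{Dselection}. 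The trade-off: your version must verify nonemptiness explicitly (which you do correctly, via Lemma \ref{DG_lipschitz} plus attainment of the optimal coupling on a compact space) and the closed-graph property of the constrained correspondence (your limit argument for the marginals, for $G$, and for the cost functional is fine), but it avoids importing the measurable maximum theorem; the paper's version yields the actual optimal coupling as the selector, which is marginally stronger than needed, at the cost of the extra theorem and of leaving the verification of the third bullet implicit. Both arguments rest on the same two pillars — the $1$-Lipschitzness of $G$ and a measurable-selection theorem for closed-graph correspondences between compact metric spaces — so your proof is a valid, and arguably more self-contained, alternative.
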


\begin{proof}
Let $S:= \graph (G) \times X$, $X':=\Delta(X \times X)$ and $\Xi:S \rightrightarrows X'$ the correspondence defined for all $(x,u,y) \in S$ by
\begin{equation*}
\Xi(x,u,y)=\left\{\pi \in \Delta(X \times X) \ | \ \pi_1=u, \pi_2 \in G(y) \right\},
\end{equation*}
where $\pi_1$ (resp. $\pi_2$) denotes the first (resp. second) marginal of $\pi$. 
The correspondence $\Xi$ has a closed graph. Let $f: X' \rightarrow \m{R}$ defined by
\begin{equation*}
f(\pi):=\int_{X\times X} d(s,t) \pi(ds,dt).
\end{equation*}
The function $f$ is continuous. Applying the measurable maximum theorem (see \cite[Theorem 18.19, p.605]{hitchhiker}), 
we obtain that the correspondence $\displaystyle s \rightarrow \argmin_{\pi \in \Xi(s)} f(\pi)$ has a measurable selector, which proves the lemma.
\end{proof}

\begin{proposition} \label{Djunction}
Let $x,y \in X$ and $\sigma$ be a strategy. Then there exist a probability measure $\m{P}^{x,y}_{\sigma}$ on $H_{\infty} \times H_\infty$, and a strategy $\tau$ such that:
\begin{itemize}
\item
$\m{P}^{x,y}_{\sigma}$ has first marginal $\m{P}^{x}_{\sigma}$,
\item $\m{P}^{x,y}_{\sigma}$ has second marginal $\m{P}^{y}_{\tau}$,
\item {\color{black} The following inequalities holds: for every $n\geq 1$
\begin{equation*}
\m{E}^{x,y}_{\sigma}\left( \frac{1}{n} \sum_{m=1}^{n}\left|r(X_m)-r(Y_m)\right|\right) \leq d(x,y),
\end{equation*}
and}
\begin{equation*}
\m{E}^{x,y}_{\sigma}\left(\limsup_{n \rightarrow+\infty} \frac{1}{n} \sum_{m=1}^{n}\left|r(X_m)-r(Y_m)\right|\right) \leq d(x,y),
\end{equation*}
where $X_m$ (resp. $Y_m$) is the $m$-th coordinate of the first (resp. second) infinite history.
\end{itemize}
\end{proposition}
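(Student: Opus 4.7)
The idea is to couple the process under $\sigma$ starting from $x$ with another process starting from $y$, by iterating at each stage the measurable coupling $\psi$ from Lemma \ref{Dselec_couplage}. Set $(X_0, Y_0) := (x,y)$; then, given $(X_0, Y_0, \dots, X_{m-1}, Y_{m-1})$, let $u_m := \sigma_m(X_0, \dots, X_{m-1}) \in G(X_{m-1})$ and draw $(X_m, Y_m)$ from $\psi(X_{m-1}, u_m, Y_{m-1}) \in \Delta(X \times X)$. The measurability of $\sigma$ and of $\psi$ guarantees that this defines a stochastic kernel at every stage, and the Ionescu-Tulcea theorem assembles these kernels into a probability measure $\m{P}^{x,y}_{\sigma}$ on $H_\infty \times H_\infty$.

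By the first property of $\psi$, the conditional law of $X_m$ given the joint past depends only on $(X_0, \dots, X_{m-1})$ and equals $\sigma_m(X_0, \dots, X_{m-1})$; hence the first marginal of $\m{P}^{x,y}_{\sigma}$ is $\m{P}^x_\sigma$. For the second marginal, define $\tau_m(y_0, \dots, y_{m-1})$ as the conditional law of $Y_m$ under $\m{P}^{x,y}_{\sigma}$ given $(Y_0, \dots, Y_{m-1}) = (y_0, \dots, y_{m-1})$, obtained by standard disintegration. By the second property of $\psi$, the second marginal of $\psi(X_{m-1}, u_m, Y_{m-1})$ lies in $G(Y_{m-1})$; and since $G(y_{m-1}) = \sco F(y_{m-1})$ is convex and closed, averaging over the remaining conditioning preserves membership in $G(y_{m-1})$, so $\tau_m(y_0, \dots, y_{m-1}) \in G(y_{m-1})$. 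This makes $\tau$ a genuine behavior strategy whose induced measure coincides with the second marginal of $\m{P}^{x,y}_{\sigma}$.

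The third property of $\psi$ yields the key pointwise bound
\[
\m{E}^{x,y}_{\sigma}\bigl[d(X_m, Y_m) \mid \mathcal{F}_{m-1}\bigr] = \int_{X \times X} d(s,t)\,\psi(X_{m-1}, u_m, Y_{m-1})(ds,dt) \leq d(X_{m-1}, Y_{m-1}),
\]
so $(d(X_m, Y_m))_{m \geq 0}$ is a nonnegative bounded supermartingale; in particular $\m{E}^{x,y}_{\sigma}[d(X_m, Y_m)] \leq d(x,y)$ for every $m$. Since $r$ is $1$-Lipschitz (by the reduction made at the start of Section \ref{DproofPOMDP}), $|r(X_m) - r(Y_m)| \leq d(X_m, Y_m)$. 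Averaging over $m \leq n$ and taking expectations yields the first inequality; applying reverse Fatou to the sequence $\frac{1}{n}\sum_{m=1}^{n}|r(X_m) - r(Y_m)|$, which is uniformly bounded by $1$, yields the limsup inequality.

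The most delicate point is verifying that the $Y$-marginal of the coupling actually comes from a behavior strategy $\tau$: this boils down to the convexity of $G(y) = \sco F(y)$, together with the measurability built into Lemma \ref{Dselec_couplage}. Once $\tau$ is in hand, the quantitative bounds follow immediately from the Lipschitz control encoded in $\psi$.
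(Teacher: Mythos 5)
Your construction of the coupling $\m{P}^{x,y}_{\sigma}$ via the kernel $\psi$ of Lemma \ref{Dselec_couplage}, the identification of the first marginal, the extraction of the behavior strategy $\tau$ by disintegration (using convexity of $G$), the supermartingale bound $\m{E}^{x,y}_{\sigma}[d(X_m,Y_m)]\leq d(x,y)$, and the first inequality all coincide with the paper's proof. The gap is in the final step. The reverse Fatou lemma, for a sequence $Z_n$ bounded above by an integrable function, states $\m{E}[\limsup_n Z_n]\geq \limsup_n \m{E}[Z_n]$; it bounds $\m{E}[\limsup_n Z_n]$ from \emph{below}, not from above. From $\m{E}[Z_n]\leq d(x,y)$ for every $n$, with $Z_n=\frac{1}{n}\sum_{m=1}^{n}|r(X_m)-r(Y_m)|$, you cannot conclude $\m{E}[\limsup_n Z_n]\leq d(x,y)$: the expectation of the limsup can strictly exceed the supremum of the expectations even for uniformly bounded Cesàro averages. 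For instance, take a $\{0,1\}$-valued sequence that is constant on blocks of doubly exponentially growing length, the block values being i.i.d. Bernoulli of parameter $1/2$: then each $\m{E}[Z_n]$ is close to $1/2$ while $\limsup_n Z_n=1$ almost surely. So the limsup inequality does not follow from the stage-wise expectation bounds alone.

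The missing idea, which is exactly how the paper concludes, is to use the supermartingale property pathwise rather than in expectation: since $(d(X_m,Y_m))_{m\geq 0}$ is a nonnegative supermartingale, it converges $\m{P}^{x,y}_{\sigma}$-almost surely to a random variable $D$ with $\m{E}^{x,y}_{\sigma}(D)\leq d(x,y)$. The Ces\`aro theorem then gives the almost sure bound $\limsup_{n\rightarrow+\infty}\frac{1}{n}\sum_{m=1}^{n}|r(X_m)-r(Y_m)|\leq \limsup_{n\rightarrow+\infty}\frac{1}{n}\sum_{m=1}^{n}d(X_m,Y_m)= D$, and integrating this pathwise inequality yields the second claim. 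With this correction, the rest of your argument is sound and matches the paper's proof.
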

\begin{proof}


Define the stochastic process $(X_m,Y_m)_{m \geq 0}$ on $(X \times X)^\N$ such that the conditional distribution of $(X_m,Y_m)$ knowing $(X_l,Y_l)_{0 \leq l\leq m-1}$ is 
\[
\psi(X_{m-1},\sigma(X_0,...,X_{m-1}),Y_{m-1}),
\]
with $\psi$ defined as in Lemma \ref{Dselec_couplage}. Let $\m{P}_{\sigma}^{x,y}$ be the law on $H_{\infty}^2$ induced by this stochastic process and the initial distribution $\delta_{(x,y)}$. By construction, the first marginal of $\m{P}_{\sigma}^{x,y}$ is $\m{P}_{\sigma}^{x}$. \\

For $m \in \m{N}^*$ and $(y_0,...,y_{m-1}) \in X^{m}$, define $\tau_m(y_0,...,y_{m-1}) \in \Delta(X)$ as being the law of $Y_{m}$, conditional to $Y_{0}=y_0,...,Y_{m-1}=y_{m-1}$. By convexity of $G$, this defines a (behavior) strategy $\tau$ in the game $\Gamma$. Moreover, the probability measure $\m{P}_{\tau}^{y}$ is equal to the second marginal of $\m{P}_{\sigma}^{x,y}$. 
\\


{ \color{black}
 
For all $m \in \m{N}^*$, we have $\m{P}_{\sigma}^{x,y}$-almost surely
\begin{eqnarray*}
\m{E}_\sigma^{x,y} \left(d(X_m,Y_m)|X_{m-1},Y_{m-1} \right) & = & \int_{X\times X} d(s',t')\psi(X_{m-1},\sigma(X_0,...,X_{m-1}),Y_{m-1})(ds',dt'),\\
&\leq&d(X_{m-1},Y_{m-1}).
\end{eqnarray*}
}

{\color{black}

\noindent The random process $(d(X_m,Y_{m}))_{m \geq 0}$ is a positive supermartingale. Therefore, we have 
\begin{align*}
\m{E}^{x,y}_{\sigma}\left( \frac{1}{n} \sum_{m=1}^{n}\left|r(X_m)-r(Y_m)\right|\right) & \leq \m{E}^{x,y}_{\sigma}\left( \frac{1}{n} \sum_{m=1}^{n} d(X_m,Y_m) \right), \\
& =   \frac{1}{n} \sum_{m=1}^{n} \m{E}^{x,y}_{\sigma}\left(d(X_m,Y_m) \right), \\
& \leq  d(x,y).
\end{align*}
}
\noindent Moreover, the random process $(d(X_m,Y_{m}))_{m \geq 0}$ converges $\m{P}^{x,y}_{\sigma}$-almost surely to a random variable $D$, such that $\m{E}^{x,y}_{\sigma}(D) \leq d(x,y)$. For every $n\geq 1$, we have
\begin{eqnarray*}
\frac{1}{n} \sum_{m=1}^{n} \left|r(X_m)-r(Y_m)\right| &\leq& 
\frac{1}{n} \sum_{m=1}^{n} d(X_m,Y_m)
\end{eqnarray*}
and the Ces\`aro theorem yields
\begin{equation*}
\limsup_{n \rightarrow +\infty} \frac{1}{n} \sum_{m=1}^{n} \left|r(X_m)-r(Y_m)\right| \leq D \quad \m{P}^{x,y}_{\sigma} \ \text{a.s.}
\end{equation*}  \\
Integrating the last inequality yields the proposition. 
%
\end{proof}

}

Proposition \ref{Djunction} implies that for all $n \geq 1$, $v_n$ is 1-Lipschitz, and that $w_{\infty}$ is 1-Lipschitz. Thus, Theorem \ref{D1Lips} holds.



%
%


\subsection{Proof of Theorem \ref{DthmMDP} for MDPs}

In this subsection, we consider a MDP $\Gamma=(K,I,g,q)$, as described in Subsection \ref{subMDP}: the state space $(K,d_K)$ and the action set $(I,d_I)$ are compact metric, and the transition function $q$ and the payoff function $g$ are continuous. As in the previous section, without loss of generality we assume that the payoff function $g$ is in fact $1$-Lipschitz.
\\

In the model of gambling house, there is no explicit set of actions. In order to apply Theorem \ref{Dtheo1} to $\Gamma$, we put the action played in the state variable. Indeed, we consider an auxiliary gambling house $\widetilde{\Gamma}$, with state space $K \times I \times K$. At each stage $m \geq 1$, the state $x_m$ in the gambling house corresponds to the state $(k_{m},i_{m},k_{m+1})$ in the MDP. Formally, $\widetilde{\Gamma}$ is defined as follows: 
\begin{itemize}
\item
The state space is $X:=K \times I \times K$, equipped with the distance $d$ defined by
\[
\forall (k,i,l), (k',i',l')\in X, \ d((k,i,l),(k',i',l'))= \max(d_K(k,k'),d_I(i,i'),d_K(l,l')).
\]
\item
The payoff function $r:X \rightarrow [0,1]$ is defined by: for all $(k,i,k') \in X$, $r(k,i,k' ):=g(k,i)$.
\item
The correspondence $F: X \rightarrow \Delta(X)$ is defined by:
\begin{equation*} 
\forall (k,i,k') \in K \times I \times K, \ F(k,i,k'):=\left\{\delta_{k',i'}\otimes q(k',i'): i' \in I \right\},
\end{equation*}
where $\delta_{k',i'}$ is the Dirac measure at $(k',i')$, and the symbol $\otimes$ stands for product measure.
\end{itemize}
Fix some arbitrary state $k_0 \in K$ and some arbitrary action $i_0 \in I$. Given an initial state $k_1$ in the MDP $\Gamma$, the corresponding initial state $x_0$ in the gambling house $\widetilde{\Gamma}$ is $(k_0,i_0,k_1)$. 
By construction, the payoff at stage $m$ in $\widetilde{\Gamma}(x_0)$ corresponds to the payoff at stage $m$ in $\Gamma(k_1)$.

Now let us check the assumptions of Theorem \ref{Dtheo1}. The state space $X$ is compact metric. Because $g$ is continuous, $r$ is continuous, and the following lemma holds:

\begin{lemma}\label{DlipMDP}
The correspondence $F$ has a closed graph. 
\end{lemma}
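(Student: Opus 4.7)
The plan is to verify sequential closedness of the graph directly. Take a sequence $((k_n,i_n,k'_n),\mu_n)_{n \geq 1}$ in $\graph F$ converging to some $((k,i,k'),\mu) \in X \times \Delta(X)$; I must show $\mu \in F(k,i,k')$. By definition of $F$, for each $n$ there exists an action $i'_n \in I$ such that
\[
\mu_n = \delta_{(k'_n,i'_n)} \otimes q(k'_n,i'_n).
\]

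Because $I$ is compact, I can extract a subsequence (still indexed by $n$ for convenience) such that $i'_n \to i'$ for some $i' \in I$. The continuity of $q$ then gives $q(k'_n,i'_n) \to q(k',i')$ in $(\Delta(K),d_{KR})$, and the convergence $(k'_n,i'_n) \to (k',i')$ in $K \times I$ yields $\delta_{(k'_n,i'_n)} \to \delta_{(k',i')}$ for the weak$^*$ topology on $\Delta(K \times I)$.

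The key step is then to pass to the limit in the product measure. For any $f \in \mathcal{C}(X,[0,1])$, which I can assume is of the form $f(k,i,l) = \varphi(k,i)\psi(l)$ with $\varphi \in \mathcal{C}(K \times I,[0,1])$ and $\psi \in \mathcal{C}(K,[0,1])$ (it suffices to check weak$^*$ convergence on such a separating class, which generates a dense subalgebra of $\mathcal{C}(X,[0,1])$ by Stone-Weierstrass), I get
\[
\int_X f\,d\mu_n = \varphi(k'_n,i'_n) \int_K \psi\,dq(k'_n,i'_n) \;\longrightarrow\; \varphi(k',i') \int_K \psi\,dq(k',i') = \int_X f\,d\bigl(\delta_{(k',i')}\otimes q(k',i')\bigr).
\]
By uniqueness of the weak$^*$ limit, $\mu = \delta_{(k',i')} \otimes q(k',i')$, which belongs to $F(k,i,k')$ by definition.

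The only subtlety is justifying continuity of the product operation $\Delta(K \times I) \times \Delta(K) \to \Delta(X)$, but since one factor is always a Dirac mass this reduces to the elementary fact above. Thus $F$ has a closed graph, completing the verification of the hypotheses needed to apply Theorem \ref{Dtheo1} to $\widetilde{\Gamma}$.
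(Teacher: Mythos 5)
Your proof is correct and takes essentially the same route as the paper: write $\mu_n=\delta_{(k'_n,i'_n)}\otimes q(k'_n,i'_n)$, use compactness of $I$ to pass to a convergent subsequence of $(i'_n)$, and use joint continuity of $q$ to identify the limit as $\delta_{(k',i')}\otimes q(k',i')\in F(k,i,k')$. If anything, you are a bit more careful than the paper, which asserts convergence of $(i'_n)$ without explicitly extracting a subsequence and does not spell out the weak$^*$ convergence of the product measures that you justify via product test functions and Stone--Weierstrass.
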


\begin{proof}
Let $(x_n,u_n)_{n\in \N} \in (Graph\ F)^{\N}$ be a convergent sequence. By definition of $F$, for every $n \geq 1$, there exist $(k_n,i_n,k'_n) \in K\times I \times K$ and $i_n' \in I$ such that 
\[
x_n=(k_n,i_n,k'_n),
\]
and
\[
u_n=\delta_{k'_n,i'_n} \otimes q(k'_n,i'_n) .
\]
Moreover, the sequence $(k_n,i_n,k'_n,i'_n)_{n\geq 1}$ converges to some $(k,i,k',i')\in K\times I \times K\times I.$ Because the transition $q$ is jointly continuous, we obtain that $(u_n)$ converges to $\delta_{(k',i')}\otimes q(k',i')$, which is indeed in $F(k,i,k')$.
\end{proof}

We now prove that for all $n \in \m{N}^*$, $v_n$ is 1-Lipschitz, and that $w_{\infty}$ is 1-Lipschitz.  It is more convenient to prove this result in the MDP $\Gamma$, rather than in the gambling house $\widetilde{\Gamma}$. Thus, in the next proposition, $H_\infty=(K\times I)^\infty$ is the infinite history in $\Gamma$, a strategy $\sigma$ is a map from $\cup_{m \geq 1}  K \times  (I \times K)^{m-1}$ to $\Delta(I)$, and $\P^{k_1}_\sigma$ denotes the probability over $H_\infty$ generated by the pair $(k_1,\sigma)$. This proposition is similar to Proposition \ref{Djunction}.

\begin{proposition} \label{Djunction_MDP}
Let $k_1,k_1' \in K$ and $\sigma$ be a strategy. Then there exist a probability measure $\m{P}^{k_1,k_1'}_{\sigma}$ on $H_{\infty} \times H_\infty$, and a strategy $\tau$ such that:
\begin{itemize}
\item
$\m{P}^{k_1,k_1'}_{\sigma}$ has first marginal $\m{P}^{k_1}_{\sigma}$,
\item
$\m{P}^{k_1,k_1'}_{\sigma}$ has second marginal $\m{P}^{k_1'}_{\tau}$,
\item
The following inequalities hold: 
for every $n\geq 1$,
\begin{equation*}
\m{E}^{k_1,k'_1}_{\sigma}\left( \frac{1}{n} \sum_{m=1}^{n}\left|g(K_m,I_m)-g(K'_m,I'_m)\right|\right) \leq d_K(k_1,k'_1),
\end{equation*}
and
\begin{equation*}
\m{E}^{k_1,k'_1}_{\sigma}\left(\limsup_{n \rightarrow+\infty} \frac{1}{n} \sum_{m=1}^{n}\left|g(K_m,I_m)-g(K'_m,I'_m)\right|\right) \leq d_K(k_1,k'_1),
\end{equation*}
where $K_m,I_m$ (resp. $K'_m,I'_m$) is the $m$-th coordinate of the first (resp. second) infinite history.
\item
Under $\m{P}^{k_1,k_1'}_{\sigma}$, for all $m \geq 1$, $I_m=I_m'$. 
\end{itemize}
\end{proposition}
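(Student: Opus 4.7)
The plan is to mimic the proof of Proposition \ref{Djunction}, but with the additional constraint $I_m=I_m'$ for all $m$, which is possible because the MDP transition $q(\cdot,i)$ is $1$-Lipschitz in the state for each fixed action. The idea is to use the same action in both copies and to couple the two successor states optimally for the KR distance.

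First, by a measurable selection argument analogous to Lemma \ref{Dselec_couplage}, I would produce a measurable mapping $\chi:K\times K\times I\rightarrow \Delta(K\times K)$ such that, for every $(k,k',i)$, the first marginal of $\chi(k,k',i)$ is $q(k,i)$, the second marginal is $q(k',i)$, and
\[
\int_{K\times K} d_K(s,t)\,\chi(k,k',i)(ds,dt)\leq d_K(k,k').
\]
This follows from the measurable maximum theorem: the correspondence sending $(k,k',i)$ to the set of couplings with the prescribed marginals has a closed graph with nonempty compact values, and the functional $\pi\mapsto \int d_K\, d\pi$ is continuous, while the above inequality is attainable because $d_{KR}(q(k,i),q(k',i))\leq d_K(k,k')$ by the Lipschitz assumption.

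I then build the process $(K_m,I_m,K_m',I_m')_{m\geq 1}$ on $H_\infty\times H_\infty$ iteratively from $(K_1,K_1'):=(k_1,k_1')$: at stage $m$, draw $I_m$ from $\sigma(K_1,I_1,\ldots,K_m)$, set $I_m':=I_m$, and then draw $(K_{m+1},K_{m+1}')$ from $\chi(K_m,K_m',I_m)$. The induced probability $\m{P}^{k_1,k_1'}_\sigma$ has first marginal $\m{P}^{k_1}_\sigma$ by construction, and its second marginal is $\m{P}^{k_1'}_\tau$ where $\tau_m(k_1',i_1',\ldots,k_m')$ is defined as the conditional distribution of $I_m'$ given $(K_1',I_1',\ldots,K_m')$; the convexity arguments of Proposition \ref{Djunction} show that $\tau$ is a bona fide behavior strategy.

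The bounds follow from the standard supermartingale argument. By the defining property of $\chi$, $\m{E}^{k_1,k_1'}_\sigma(d_K(K_{m+1},K_{m+1}')\mid \mathcal{F}_m)\leq d_K(K_m,K_m')$ almost surely, so $(d_K(K_m,K_m'))_{m\geq 1}$ is a positive supermartingale starting at $d_K(k_1,k_1')$. Since $g$ is $1$-Lipschitz and $I_m=I_m'$, one has $|g(K_m,I_m)-g(K_m',I_m')|\leq d_K(K_m,K_m')$, so averaging and taking expectations yields the first inequality. Doob's theorem gives an almost sure limit $D$ with $\m{E}(D)\leq d_K(k_1,k_1')$, and Cesaro's lemma then bounds the limsup of the Cesaro mean by $D$ almost surely; integrating gives the second inequality. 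The main (minor) obstacle is the measurable selection of $\chi$, but this is handled exactly as in Lemma \ref{Dselec_couplage}; once $\chi$ is available, the rest is a direct adaptation of Proposition \ref{Djunction} with the extra freedom that actions need not be coupled at all since we simply copy them.
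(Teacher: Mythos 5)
Your proposal is correct and follows essentially the same route as the paper: a measurable selection of optimal couplings between $q(k,i)$ and $q(k',i)$ (the paper's map $\psi$, your $\chi$), copying the action on the second coordinate, and the positive-supermartingale/Ces\`aro argument from Proposition \ref{Djunction}. The only cosmetic difference is your appeal to convexity for $\tau$, which is unnecessary in the MDP setting since a behavior strategy may take any value in $\Delta(I)$; the conditional law of $I_m'$ given the second-coordinate history is automatically admissible.
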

\begin{proof}

Exactly as in Lemma \ref{Dselec_couplage}, one can construct a measurable mapping $\psi:K \times K \times I \rightarrow \Delta(K \times K)$ 
such that for all $(k,k',i) \in K \times K \times I$, $\psi(k,k',i) \in \Delta(K \times K)$ is an optimal coupling between $q(k,i)$ and $q(k',i)$ for the KR distance.\\

We define a stochastic process on $I \times K \times I \times K$, in the following way: given an arbitrary action $i_0$, we set $I_0=I_0'=i_0$, $K_1=k_1$, $K_1'=k'_1$.
Then, for all $m \geq 2$, given $(I_{m-1},K_m,I'_{m-1},K_m')$, we construct $(I_{m},K_{m+1},I'_{m},K'_{m+1})$ as follows:
\begin{itemize}
\item
$I_m$ is drawn from $\sigma(K_1,I_1,...,K_m)$,
\item
$(K_{m+1},K'_{m+1})$ is drawn from $\psi(K_m,K'_m,I_m)$,
\item
we set $I'_m:=I_m$.
\end{itemize}
By construction, $\m{P}^{k_1,k_1'}_{\sigma}$ has first marginal $\m{P}^{k_1}_{\sigma}$. For $m \geq 1$ and $h_m=(k'_1,i'_1,...,k'_m) \in H_m$, define $\tau(h_m) \in \Delta(I)$ as being the law of $I'_m$, conditional to $K'_1=k'_1,I'_1=i_1',...,K'_m=k'_m$. This defines a strategy. 
Moreover, for all $m \geq 1$, we have
\begin{equation*}
\m{E}^{k_1,k_1'}_{\sigma}(d_K(K_{m+1},K'_{m+1})|K_{m},K'_{m}) \leq d_K(K_{m},K'_{m}).
\end{equation*}
The process $(d_K(K_m,K_m'))_{m \geq 1}$ is a positive supermartingale, thus it converges almost surely. We conclude exactly as in the proof of Proposition \ref{Djunction}.
\end{proof}

The previous proposition implies that the value functions $v_n$ and $w_\infty$ are $1$-Lipschitz. Therefore, the family $\{v_n, n \geq1\}$ is equicontinuous, and $w_\infty$ is continuous. By Theorem \ref{Dtheo1}, the gambling house $\widetilde{\Gamma}$ has a pathwise uniform value in pure strategies. It follows that the MDP $\Gamma$ has a pathwise uniform value in pure strategies, and Theorem \ref{DthmMDP} holds.


\begin{remark}
Renault and Venel \cite{RV12} define slightly differently the auxiliary gambling house associated to a MDP. Instead of taking $K \times I \times K$ as the auxiliary state space, they take $[0,1] \times K$, where the first component represents the stage payoff. In our framework, applying this method would lead to a measurability problem, when trying to transform a strategy in the auxiliary gambling house into a strategy in the MDP.
\end{remark}

\subsection{Proof of Theorem \ref{DthmPOMDP} for POMDPs}
In this subsection, we consider a POMDP $\Gamma=(K,I,S,g,q)$, as described in Subsection \ref{subPOMDP}: the state space $K$ and the signal space $S$ are finite, the action set $(I,d_I)$ is compact metric, and the transition function $q$ and the payoff function $g$ are continuous.\\

A standard way to analyze $\Gamma$ is to consider the belief $p_m \in \Delta(K)$ at stage $m$ about the state as a new state variable, and thus consider an auxiliary problem in which the state is perfectly observed and lies in $\Delta(K)$ (see \cite{rhenius74}, \cite{sawaragi70}, \cite{yushkevich76}). The function $g$ is linearly extended to $\Delta(K) \times \Delta(I)$, in the following way: for all $(p,u) \in \Delta(K) \times \Delta(I)$,
\begin{equation*}
g(p,u):=\sum_{k \in K} \int_{I} g(k,i) u(di).
\end{equation*}
Let $\widetilde{q}:\Delta(K) \times I \rightarrow \Delta(\Delta(K))$ be the transition on the beliefs about the state, induced by $q$: if at some stage of the game, the belief of the decision-maker is $p$, and he plays the action $i$, then his belief about the next state will be distributed according to $\widetilde{q}(p,i)$. We extend linearly the transition $\widetilde{q}$ on $\Delta(K) \times \Delta(I)$, in the following way: for all $f \in \mathcal{C}(\Delta(K),[0,1])$,
\begin{equation*}
\int_{\Delta(K)} f(p) \ [\widetilde{q}(p,u)](dp)=\int_I \int_{\Delta(K)} f(p) \ [\widetilde{q}(p,i)](dp) u(di).
\end{equation*}

We can also define an auxiliary gambling house $\widetilde{\Gamma}$, with state space $[0,1] \times I \times \Delta(K)$: at stage $m$, the auxiliary state $x_m$ corresponds to the triple $(g(p_m,i_m),i_m,p_{m+1})$. Formally, the gambling house $\widetilde{\Gamma}$ is defined as follows: 

\begin{itemize}
\item
State space $X:=[0,1] \times I \times \Delta(K)$: the set $\Delta(K)$ is equipped with the norm 1 $\left\|.\right\|_K$, and the distance $d$ on $X$ is $d:=\max(|.|,d_I,\left\|.\right\|_K)$. 
\item
Payoff function $r:X \rightarrow [0,1]$ such that for all $x=(a,i,p) \in X$, $r(x):=a$. 
\item
Correspondence $F: X \rightarrow \Delta(X)$ defined for all $x=(a,i,p) \in X$ by 
\\
$F(x):=\left\{g(p,i') \otimes \delta_{i'} \otimes \widetilde{q}(p,i'): i' \in I \right\}$.
\end{itemize}
Fix some arbitrary $a_0 \in [0,1]$ and $i_0 \in I$. To each initial belief $p_1 \in \Delta(K)$ in $\Gamma$, we associate an initial state $x_0(p)$ in $\widetilde{\Gamma}$ by:
\begin{equation*}
x_0(p_1):=(a_0,i_0,p_1). 
\end{equation*}

\hspace{5mm}

By construction, the payoff at stage $m$ in the auxiliary gambling house $\widetilde{\Gamma}(x_0(p_1))$ corresponds to the payoff $g(p_m,i_m)$ in the POMDP $\Gamma(p_1)$. In particular, for all $n \in \m{N}^*$, the value of the $n$-stage gambling house $\widetilde{\Gamma}(x(p_1))$ coincides with the value of the $n$-stage POMDP $\Gamma(p_1)$, which is denoted by $v_n(p_1)$. 

One could check that $\widetilde{\Gamma}$ satisfies the assumptions of Theorem \ref{Dtheo1} and therefore has a pathwise uniform value. This would especially imply that $\widetilde{\Gamma}$ has a uniform value in pure strategies, and it would prove that $\Gamma$ has a uniform value in pure strategies. Indeed, let $p_1 \in \Delta(K)$ and $\widetilde{\sigma}$ be a strategy in $\widetilde{\Gamma}(x_0(p_1))$. Let $\sigma$ be the associated strategy in the POMDP $\Gamma(p_1)$. For all $n \geq 1$, we have
\begin{eqnarray*}
\m{E}^{x_0}_{\widetilde{\sigma}}\left( \frac{1}{n} \sum_{m=1}^{n} r(x_m) \right) &=&
\m{E}^{p_1}_{\sigma}\left( \frac{1}{n} \sum_{m=1}^{n} g(p_m,i_m) \right)
\\
&=&
 \m{E}^{p_1}_{\sigma}\left(\frac{1}{n} \sum_{m=1}^{n} g(k_m,i_m) \right).
\end{eqnarray*}
Consequently, the fact that $\widetilde{\Gamma}(x_0(p_1))$ has a uniform value in pure strategies implies that $\Gamma(p_1)$ also has a uniform value in pure strategies.

Unfortunately, this approach does not prove Theorem \ref{DthmPOMDP}, i.e. the existence of the pathwise uniform value in $\Gamma$, due to the following problem: 

\begin{problem} 
It may happen that
\begin{equation*}
\m{E}^{x_0(p_1)}_{\widetilde{\sigma}}\left(\liminf_{n \rightarrow+\infty} \frac{1}{n} \sum_{m=1}^{n} r(x_m) \right)>
\m{E}^{p_1}_{\sigma}\left(\liminf_{n \rightarrow+\infty} \frac{1}{n} \sum_{m=1}^{n} g(k_m,i_m) \right).
\end{equation*}
Indeed, $r(x_m)$ is not equal to $g(k_m,i_m)$: it is the expectation of $g(k_m,i_m)$ with respect to $p_m$. Consequently, the fact that $\widetilde{\sigma}$ is a pathwise $\epsilon$-optimal strategy in $\widetilde{\Gamma}(x_0(p_1))$ does not imply that $\sigma$ is a pathwise $\epsilon$-optimal strategy in $\Gamma(p_1)$.
\end{problem}

To prove Theorem \ref{DthmPOMDP}, we adapt the proof of Theorem \ref{Dtheo1} to the framework of POMDPs. Recall that the proof of Theorem \ref{Dtheo1} was decomposed into three lemmas (Lemmas \ref{Dinvariant}, \ref{Dbirkhoff} and \ref{Djunction_opt}) and a conclusion (Subsection \ref{concl}). We adapt the three lemmas, and the conclusion is similar. \\


 In order to obtain the first lemma, we check that $F$ has a closed graph.

\begin{proposition}
The correspondence $F$ has a closed graph.
\end{proposition}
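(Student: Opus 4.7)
Let $(x_n, u_n)_{n \in \m{N}}$ be a sequence in $\graph F$ that converges to some $(x, u) \in X \times \Delta(X)$, and write $x_n = (a_n, i_n, p_n)$ and $x = (a, i, p)$. By definition of $F$, for each $n$ there exists $i_n' \in I$ such that
\[
u_n = \delta_{g(p_n, i_n')} \otimes \delta_{i_n'} \otimes \widetilde{q}(p_n, i_n').
\]
Since $I$ is compact metric, I would extract a subsequence along which $i_n' \to i'$ for some $i' \in I$. The entire task then reduces to showing that $u = \delta_{g(p, i')} \otimes \delta_{i'} \otimes \widetilde{q}(p, i')$, which manifestly lies in $F(x)$.

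The verification breaks into three continuity statements followed by a joint-continuity argument for product measures. First, since $p_n \to p$ in $\Delta(K)$ and $i_n' \to i'$ in $I$, continuity of $g: K\times I \to [0,1]$ together with finiteness of $K$ gives $g(p_n, i_n') \to g(p, i')$ in $[0,1]$, hence $\delta_{g(p_n, i_n')} \to \delta_{g(p, i')}$ weakly-$*$ in $\Delta([0,1])$. Second, $i_n' \to i'$ implies $\delta_{i_n'} \to \delta_{i'}$ weakly-$*$ in $\Delta(I)$. Third, $\widetilde{q}: \Delta(K) \times I \to \Delta(\Delta(K))$ is continuous: because $K$ and $S$ are finite, the Bayes update computing the posterior from a prior $p$, an action $i$ and a signal $s$ is a rational function of the coordinates of $p$ and of $q(k,i)(k',s)$, all of which vary continuously in $(p,i)$, while the probability of each signal $s$ is itself continuous. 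Hence $\widetilde{q}(p_n, i_n') \to \widetilde{q}(p, i')$ weakly-$*$ in $\Delta(\Delta(K))$.

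Finally, on compact metric spaces the product operation on probability measures is jointly continuous for the weak-$*$ topology (test against products of continuous functions and invoke the Stone--Weierstrass density of such tensor products in $\mathcal{C}([0,1]\times I \times \Delta(K), [0,1])$). Combining the three limits yields
\[
u_n \longrightarrow \delta_{g(p, i')} \otimes \delta_{i'} \otimes \widetilde{q}(p, i') \quad \text{weakly-}*,
\]
and by uniqueness of limits this measure equals $u$. Therefore $u \in F(x)$ and $\graph F$ is closed.

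The only genuine subtlety is the continuity of $\widetilde{q}$: if for the limit pair $(p,i')$ some signal $s$ has zero marginal probability, Bayes' rule is ill-defined on that branch, but that branch carries zero weight in $\widetilde{q}(p,i')$, so one may define the posterior there arbitrarily without disturbing the weak-$*$ limit of $\widetilde{q}(p_n,i_n')$. This is the point where finiteness of $K$ and $S$ is essential; the rest of the argument is a routine passage to the limit.
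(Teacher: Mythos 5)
Your proof is correct and follows essentially the same route as the paper's: extract a limit action $i'$ by compactness of $I$, then pass to the limit in $u_n$ using continuity of $g$ and of $\widetilde{q}$, concluding that $u$ lies in $F(x)$. The only difference is that the paper obtains the joint continuity of $\widetilde{q}$ by citing Feinberg's theorem, whereas you prove it directly via Bayes' rule (correctly handling signals of vanishing probability), which is a perfectly valid, self-contained substitute in the finite $K$, $S$ setting.
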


\begin{proof}
Let $(x_n,u_n)_{n\in \N} \in (Graph\ F)^{\N}$ be a sequence that converges to $(x,u) \in X\times \Delta(X)$. By definition of $F$, for every $n\geq 1$ there exists $(a_n,i_n,p_n,i'_n) \in ([0,1]\times I \times \Delta(K) \times I)$ such that 
\[
x_n=(a_n,i_n,p_n),
\]
and
\[
u_n=g(p_n,i'_n)\otimes \delta_{i'_n} \otimes \widetilde{q}(p_n,i'_n).
\]
It follows that the sequence $(a_n,i_n,p_n,i'_n)_{n\geq 1}$ converges to some $(a,i,p,i')\in [0,1]\times I \times \Delta(K) \times I$ and $x=(a,i,p)$.\\

By Feinberg \cite[Theorem 3.2]{feinberg15}, the function $\widetilde{q}$ is jointly continuous. Because the payoff function $g$ is also continuous, we obtain that $u_n$ converges to $u=q(p,i')\otimes \delta_{i'} \otimes \widetilde{q}(p',i')$ which is indeed in $F(x)$.
\end{proof}


Now we can apply Lemma \ref{Dinvariant} to the gambling house $\widetilde{\Gamma}$. 
For $p \in \Delta(K)$, define $v(p):=\limsup_{n \rightarrow+\infty} v_n(p)$.
Note that for all $x=(a,i,p) \in X$, the set $F(x)$ depends only on the third component $p$. Thus, Lemma \ref{Dinvariant} implies the following lemma for the POMDP $\Gamma$:

\begin{lemma} \label{Dinvariant2}
Let $p_1 \in \Delta(K)$. There exists a distribution $\mu^* \in \Delta(\Delta(K))$ and a stationary strategy $\sigma^*: \Delta(K) \rightarrow \Delta(I)$ such that 
\begin{itemize}
\item $\mu^*$ is $\sigma^*$-invariant: for all $f \in \mathcal{C}(\Delta(K),[0,1])$,
\begin{equation*}
\int_{\Delta(K)} \hat{f}(\widetilde{q}(p,\sigma^*(p)) \mu^*(dp)=\hat{f}(\mu^*)
\end{equation*}
\item For every $\varepsilon>0$ and $N\geq 1$, there exists a (pure) strategy $\sigma$ in $\Gamma$ and $n \geq N$ such that $\sigma$ is $0$-optimal in $\Gamma_n(p_1)$ and 
\[
d_{KR}\left(\frac{1}{n} \sum_{m=1}^n z_m(p_1,\sigma), \mu^* \right) \leq \varepsilon,
\]
where $z_m(p_1,\sigma)$ is the distribution over $\Delta(K)$ at stage $m$, starting from $p_1$,
\item $\displaystyle \int_{\Delta(K)} g(p,\sigma^*(p)) \mu^*(dp)=\hat{v}(\mu^*)=v(p_1)$.
\end{itemize}
\end{lemma}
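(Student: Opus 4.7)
The strategy is to apply Lemma \ref{Dinvariant} to the auxiliary gambling house $\widetilde{\Gamma}$ at the initial state $x_0(p_1)=(a_0,i_0,p_1)$, and then read off $\mu^*$ and $\sigma^*$ from the resulting invariant measure on $X=[0,1]\times I \times \Delta(K)$ by projecting to the third coordinate. Since $F(a,i,p)$ depends only on $p$, the value functions $v_n$ on $X$ depend only on $p$, so $v(a,i,p)=v(p)$; in particular $v(x_0(p_1))=v(p_1)$. Lemma \ref{Dinvariant} produces $\widetilde{\mu}^* \in \Delta(X)$ with $\widetilde{\mu}^* \in H(\widetilde{\mu}^*)$, approximating pure strategies $\widetilde{\sigma}_0$ in $\widetilde{\Gamma}$, and the identity $\hat{r}(\widetilde{\mu}^*)=\hat{v}(\widetilde{\mu}^*)=v(p_1)$. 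I then define $\mu^* \in \Delta(\Delta(K))$ to be the third marginal of $\widetilde{\mu}^*$.

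To obtain the stationary strategy, I use $\widetilde{\mu}^* \in H(\widetilde{\mu}^*)$ to extract a measurable selector $\widetilde{\sigma}^*:X \to \Delta(X)$ with $\widetilde{\sigma}^*(x)\in \sco F(x)$. Since $F(a,i,p)=\{g(p,i')\otimes \delta_{i'} \otimes \widetilde{q}(p,i'):\, i' \in I\}$ is parameterized injectively by $I$ via the second coordinate, every element of $\sco F(a,i,p)$ has the form $g(p,u)\otimes u\otimes \widetilde{q}(p,u)$ for a unique $u \in \Delta(I)$, recovered as the second marginal. Denote this by $u(x) \in \Delta(I)$; it depends measurably on $x$. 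Disintegrating $\widetilde{\mu}^*$ along its third coordinate as $\widetilde{\mu}^*(da,di,dp)=\mu^*_p(da,di)\,\mu^*(dp)$, I set
\[
\sigma^*(p):=\int_{[0,1]\times I} u(a,i,p)\,\mu^*_p(da,di) \in \Delta(I),
\]
which is measurable in $p$.

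Testing the invariance of $\widetilde{\mu}^*$ against functions of the form $\widetilde{f}(a,i,p)=f(p)$ with $f \in \mathcal{C}(\Delta(K),[0,1])$, the third marginal of $\widetilde{\sigma}^*(a,i,p)$ is $\widetilde{q}(p,u(a,i,p))$; by the linearity of $\widetilde{q}(p,\cdot)$ on $\Delta(I)$ together with linearity of $\hat{f}$, the inner integral against $\mu^*_p(da,di)$ collapses to $\hat{f}(\widetilde{q}(p,\sigma^*(p)))$, yielding the desired invariance. The identity $\hat{r}(\widetilde{\mu}^*)=\int g(p,\sigma^*(p))\,\mu^*(dp)$ follows from the same linearity argument applied to the first marginal of $\widetilde{\sigma}^*$ (whose mean is $g(p,u(x))$), while $\hat{v}(\widetilde{\mu}^*)=\hat{v}(\mu^*)$ because $v$ on $X$ depends only on $p$; combined with $\hat{r}(\widetilde{\mu}^*)=\hat{v}(\widetilde{\mu}^*)=v(p_1)$, this gives the third bullet.

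For the second bullet, the pure strategy $\widetilde{\sigma}_0$ in $\widetilde{\Gamma}$ translates to a pure strategy $\sigma$ in $\Gamma(p_1)$: the belief sequence $(p_m)$ in the POMDP is a deterministic function of past actions and signals via Bayesian updating, so any history in $\Gamma(p_1)$ determines a history in $\widetilde{\Gamma}(x_0(p_1))$. The $n$-stage payoffs coincide under this translation, so $0$-optimality is preserved; projecting the occupation-measure estimate of Lemma \ref{Dinvariant} onto the third marginal (up to a trivial $O(1/n)$ shift in indexing, harmless once $n$ is large) gives the claimed $d_{KR}$ bound. The main technical hurdle is the measurable construction of $u$ from $\widetilde{\sigma}^*$ and the disintegration $\widetilde{\mu}^*=\mu^*_p\,\mu^*$, both of which rest on standard Polish-space arguments; once $\sigma^*$ is in hand, the algebraic identities follow transparently from the linearity of $g(p,\cdot)$ and $\widetilde{q}(p,\cdot)$ on $\Delta(I)$.
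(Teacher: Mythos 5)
Your proposal is correct and follows essentially the same route as the paper, which proves Lemma \ref{Dinvariant2} simply by applying Lemma \ref{Dinvariant} to the auxiliary gambling house $\widetilde{\Gamma}$ and using that $F(a,i,p)$ depends only on $p$; you merely make explicit the projection to the third marginal, the disintegration defining $\sigma^*$, and the translation of the pure strategy back to the POMDP. One small inaccuracy: an element of $\sco F(a,i,p)$ is not the product measure $g(p,u)\otimes u\otimes \widetilde{q}(p,u)$ but the correlated mixture in which, conditionally on $i'\sim u$, the first coordinate is $g(p,i')$ and the third is drawn from $\widetilde{q}(p,i')$; this is harmless because your argument only uses that its second marginal is $u$, its third marginal is $\widetilde{q}(p,u)$, and the mean of its first coordinate is $g(p,u)$, all of which are true by the linear extensions of $g$ and $\widetilde{q}$.
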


We can now state a new lemma about pathwise convergence in $\Gamma$.  This replaces Lemma \ref{Dbirkhoff}. 

\begin{lemma}\label{Dbirkhoff2}
Let $p_1 \in \Delta(K)$ and $\mu^*$ be the corresponding measure in the previous lemma. There exists a measurable set $B \subset \Delta(K)$ such that $\mu^*(B)=1$ and for all $p \in B$,
\begin{equation*}
\m{E}^p_{\sigma^*} \left(\liminf_{n \rightarrow+\infty} \frac{1}{n} \sum_{m=1}^{n} g(k_m,i_m) \right) =v(p) \quad \m{P}^{p}_{\sigma^*}-\text{a.s.}
\end{equation*}
\end{lemma}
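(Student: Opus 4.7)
\textbf{Proof plan for Lemma~\ref{Dbirkhoff2}.} The plan is to apply the pathwise ergodic theorem (Theorem~\ref{Dergodic}) to an enriched Markov chain that tracks not only the belief but also the unobserved state and the randomized action; this replaces the role of the simple state chain in Lemma~\ref{Dbirkhoff}. Under the stationary strategy $\sigma^*$ in the POMDP, the belief chain $(p_m)$ has invariant measure $\mu^*$ by Lemma~\ref{Dinvariant2}, but the payoff $g(k_m,i_m)$ involves the hidden state and the action, so I will work with the joint chain $(p_m,k_m,i_m)_{m\geq 1}$ on $\Delta(K)\times K\times I$. This joint process is Markovian: given $(p_m,k_m,i_m)$, the pair $(k_{m+1},s_m)$ is drawn from $q(k_m,i_m)$, the next belief $p_{m+1}$ is the deterministic Bayesian update of $p_m$ given $(i_m,s_m)$, and $i_{m+1}$ is drawn from $\sigma^*(p_{m+1})$.

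I will show that the candidate measure
\begin{equation*}
\nu^*(dp,dk,di):=\mu^*(dp)\otimes p(dk)\otimes \sigma^*(p)(di)
\end{equation*}
is invariant for this chain. This rests on two ingredients: the invariance of $\mu^*$ for the belief chain (Lemma~\ref{Dinvariant2}), and the Bayesian consistency property that, under any strategy, the conditional law of the true state at stage $m$ given the current belief $p_m$ is exactly $p_m$. Combining them, starting from $\nu^*$ gives $p_{m+1}\sim \mu^*$, and conditionally on $p_{m+1}$ one recovers $k_{m+1}\sim p_{m+1}$ and $i_{m+1}\sim \sigma^*(p_{m+1})$, so that $(p_{m+1},k_{m+1},i_{m+1})\sim\nu^*$.

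Applying Theorem~\ref{Dergodic} to the joint chain and the function $G(p,k,i):=g(k,i)$ then yields a measurable function $G^*$ and a set $B_G$ with $\nu^*(B_G)=1$ such that, starting from any point in $B_G$, the Cesaro averages $\tfrac{1}{n}\sum_{m=1}^n g(k_m,i_m)$ converge almost surely to $G^*$, with $\int G^*\,d\nu^* = \int g(k,i)\,\nu^*(dp,dk,di)=\int g(p,\sigma^*(p))\,\mu^*(dp)=v(p_1)$, where the final equality uses Lemma~\ref{Dinvariant2}. Set $w(p):=\E^p_{\sigma^*}[G^*(p,k_1,i_1)]$. Fubini gives $\int w\,d\mu^*=v(p_1)=\int v\,d\mu^*$ (the right-hand equality is again Lemma~\ref{Dinvariant2}), while bounded/dominated convergence applied to the Cesaro averages together with $\E^p_{\sigma^*}\bigl[\tfrac{1}{n}\sum_{m=1}^n g(k_m,i_m)\bigr]\le v_n(p)$ yields $w(p)\le v(p)$ for every $p$. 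Equality of integrals combined with the pointwise inequality forces $w=v$ on a measurable set $B$ of full $\mu^*$-measure. For $p\in B$, Fubini applied to $\nu^*(B_G)=1$ ensures that $\tfrac{1}{n}\sum_{m=1}^n g(k_m,i_m)$ converges $\P^p_{\sigma^*}$-a.s.\ to $G^*(p,k_1,i_1)$, whose $\P^p_{\sigma^*}$-expectation equals $v(p)$, which is the conclusion.

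The main obstacle will be verifying the invariance of $\nu^*$, as the enriched chain has no analogue in the fully observed gambling house case of Lemma~\ref{Dbirkhoff}: invariance couples the belief-chain invariance of $\mu^*$ with the correctness of the Bayesian filter, and working on the joint space $\Delta(K)\times K\times I$ is necessary precisely because the payoff depends on the unobserved pair $(k_m,i_m)$ and not only on the observed belief $p_m$.
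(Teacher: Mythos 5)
Your proposal follows essentially the same route as the paper's proof: it introduces the same enriched Markov chain on beliefs, hidden states and actions (the paper writes it as $(k_m,i_m,p_m)$ on $K\times I\times\Delta(K)$), uses the same invariant measure $\nu^*=\mu^*\otimes p\otimes\sigma^*(p)$ built from Lemma~\ref{Dinvariant2} and Bayesian consistency, applies Theorem~\ref{Dergodic} to $g(k,i)$, and concludes by combining $\int w\,d\mu^*=\hat{v}(\mu^*)$ with the pointwise bound $w\le v$ obtained from dominated convergence and the $n$-stage values. The argument is correct and matches the paper's, with your explicit Fubini step being a slightly more careful rendering of the paper's passage from the full-$\nu^*$-measure set to the full-$\mu^*$-measure set $B$.
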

\begin{proof}
It is not enough to apply Birkhoff's theorem to the Markov chain $(p_m)_{m \geq 1}$, due to the problem mentioned previously.  Instead, we consider the random process $(y_m)_{m \geq 1}$ on $Y:=K \times I \times \Delta(K)$, defined for all $m \geq 1$ by $y_m:=(k_m,i_m,p_m)$: (current state, action played, belief about the current state). Under $\m{P}^{\mu^*}_{\sigma^*}$, 
this is a Markov chain. Indeed, given $m \geq 1$ and $(y_1,y_2,...,y_m) \in Y^m$, the next state $y_{m+1}$ is generated in the following way:
\begin{itemize}
\item
a pair $(k_{m+1},s_m)$ is drawn from $q(k_m,i_m)$,
\item
the decision-maker computes the new belief $p_{m+1}$ according to $p_m$ and $s_m$,
\item
the decision-maker draws an action $i_{m+1}$ from $\sigma^*(p_{m+1})$.
\end{itemize}
By construction, the law of $y_{m+1}$ depends only on $y_m$, and $(y_m)_{m \geq 1}$ is a Markov chain. Define $\nu^* \in \Delta(Y)$ such that the third marginal of $\nu^*$ is $\mu^*$, and for all $p \in \Delta(K)$, the conditional law $\nu^*(.|p) \in \Delta(K \times I)$ is $p \otimes \sigma(p)$. Under $\m{P}^{\mu^*}_{\sigma^*}$, for all $m \geq 1$, the third marginal of $y_m$ is distributed according to $\mu^*$. Moreover, conditional on $p_m$, the random variables $k_m$ and $i_m$ are independent, the conditional distribution of $k_m$ knowing $p_m$ is $p_m$, and the conditional distribution of $i_m$ knowing $p_m$ is $\sigma^*(p_m)$. Thus, $\nu^*$ is an invariant measure for the Markov chain $(y_m)_{m \geq 1}$. Define a measurable map $f:Y \rightarrow [0,1]$ by: for all $(k,i,p) \in Y$, $f(k,i,p)=g(k,i)$. 
Now we can apply Theorem \ref{Dergodic} to $(y_m)_{m \geq 1}$, and deduce that there exist $B_0 \subset \Delta(K)$ and $w: Y \rightarrow [0,1]$ such that for all $p \in B_0$, 
\begin{equation} \label{Deqbirkhoff}
\frac{1}{n} \sum_{m=1}^{n} f(y_m) \underset{n \rightarrow +\infty}{\rightarrow} w(k_1,i_1,p) \quad \m{P}^p_{\sigma^*}-\text{almost  surely,}
\end{equation}
and
\begin{equation*}
\hat{w}(\nu^*)=\hat{f}(\nu^*).
\end{equation*}
By definition of $f$, for all $m \geq 1$, $f(y_m)=g(k_m,i_m)$. 
Moreover, by definition of $\nu^*$, we have
\begin{eqnarray*}
\hat{f}(\nu^*)=\int_{\Delta(K)} g(p,\sigma^*(p)) \mu^*(dp),
\end{eqnarray*}
and by Lemma \ref{Dinvariant2}, we deduce that $\hat{f}(\nu^*)=\hat{v}(\mu^*)$. Consequently, $\hat{w}(\nu^*)=\hat{v}(\mu^*)$. Given $p \in B_0$, denote by $w_0(p)$ the expectation of $w(.,p)$ with respect to $\m{P}^p_{\sigma^*}$. By Equation (\ref{Deqbirkhoff}), we have
\begin{equation*}
\m{E}^p_{\sigma^*} \left(\lim_{n \rightarrow +\infty} \frac{1}{n} \sum_{m=1}^{n} g(k_m,i_m) \right)=w_0(p). 
\end{equation*}
Let us prove that $w_0=v \quad \m{P}^{\mu^*}_{\sigma^*}$-almost surely. Note that $\hat{w_0}(\mu^*)=\hat{w}(\nu^*)=\hat{v}(\mu^*)$. Consequently, it is enough to show that $w_0 \leq v \quad \m{P}^{\mu^*}_{\sigma^*}$-almost surely.
By the dominated convergence theorem and the definition of $v$, we have
\begin{eqnarray*}
\m{E}^p_{\sigma^*} \left(\lim_{n \rightarrow +\infty} \frac{1}{n} \sum_{m=1}^{n} g(k_m,i_m) \right)
&=& 
\lim_{n \rightarrow +\infty} \m{E}^p_{\sigma^*} \left( \frac{1}{n} \sum_{m=1}^{n} g(k_m,i_m) \right)
\\
&\leq& v(p),
\end{eqnarray*}
and the lemma is proved.
\end{proof}


For every $n\geq 1$, the value function $v_n$ is $1$-Lipschitz, as a consequence of the following proposition.  


\begin{proposition}\label{Lip1}
Let $p,p' \in \Delta(K)$ and $\sigma$ be a strategy in $\Gamma$. Then, for every $n\geq 1$, 
\begin{equation*}
\left|\m{E}^p_{\sigma}\left(\frac{1}{n} \sum_{m=1}^n g(k_m,i_m)\right)-\m{E}^{p'}_{\sigma}\left(\frac{1}{n} \sum_{m=1}^n g(k_m,i_m)\right)\right| \leq \|p-p'\|_1.
\end{equation*}
\end{proposition}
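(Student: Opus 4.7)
The plan is to construct an explicit coupling of the two POMDP runs, exploiting the crucial fact that $\sigma$ depends on the observation history $(i_1,s_1,\ldots,i_{m-1},s_{m-1})$ alone and never on the unobserved state: if we can force the observations to coincide, the actions will coincide automatically.

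First, using the finiteness of $K$, I would pick a maximal coupling $\pi\in\Delta(K\times K)$ of $p$ and $p'$, namely a measure with marginals $p,p'$ satisfying $\pi(\{(k,k):k\in K\})=1-\|p-p'\|_1/2$. I would draw $(k_1,k_1')$ from $\pi$, and then build the two coupled trajectories stage by stage using a common source of external randomness. By induction, assume on the event $\{k_1=k_1'\}$ that $(k_\ell,i_\ell,s_\ell)=(k_\ell',i_\ell',s_\ell')$ for every $\ell<m$. I would draw a single action $i_m$ from $\sigma_m(i_1,s_1,\ldots,i_{m-1},s_{m-1})$ and set $i_m'=i_m$; if moreover $k_m=k_m'$, I would draw a single pair $(k_{m+1},s_m)\sim q(k_m,i_m)$ and copy it to the primed process; otherwise I would sample the two transitions independently. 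On $\{k_1\ne k_1'\}$ the two processes are simply run independently.

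By induction, on the event $A:=\{k_1=k_1'\}$ one has $(k_m,i_m)=(k_m',i_m')$ and in particular $g(k_m,i_m)=g(k_m',i_m')$ for every $m\geq 1$; on $A^c$, the trivial bound $|g|\le 1$ gives $|g(k_m,i_m)-g(k_m',i_m')|\le 1$. Therefore the Ces\`aro averages satisfy
\[
\left|\frac{1}{n}\sum_{m=1}^n g(k_m,i_m)-\frac{1}{n}\sum_{m=1}^n g(k_m',i_m')\right|\le \mathbf{1}_{A^c},
\]
and taking expectation, using that the marginals of the coupling are $\m{P}^p_\sigma$ and $\m{P}^{p'}_\sigma$, bounds the left-hand side of the proposition by $\m{P}(A^c)=\|p-p'\|_1/2\le \|p-p'\|_1$.

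The only real work lies in verifying that this coupled construction can indeed be carried out in a Borel-measurable way, by selecting measurable disintegrations of $\pi$, of the stochastic kernel $q$, and of each $\sigma_m$, and by using a product Lebesgue probability space as the source of randomness; no new ingredient beyond those already underlying Propositions \ref{Djunction} and \ref{Djunction_MDP} is needed. As a side remark, the coupling in fact delivers the sharper bound $\|p-p'\|_1/2$, but the stated weaker bound is all that is required for the Lipschitz regularity exploited later.
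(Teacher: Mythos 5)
Your proof is correct, but it takes a different route from the paper's. The paper disposes of this proposition by citing Rosenberg, Solan and Vieille (Proposition 1 there), whose argument is the linearity of the expected payoff in the initial distribution: since $\sigma$ feeds only on the observation history $(i_1,s_1,\dots,i_{m-1},s_{m-1})$, the law of the play conditional on $k_1=k$ is the same under $\m{P}^p_\sigma$ and $\m{P}^{p'}_\sigma$ (this is exactly the device reused in the paper's proof of Lemma~\ref{Djunction_opt2}); hence $\m{E}^p_\sigma\bigl(\frac1n\sum_{m=1}^n g(k_m,i_m)\bigr)=\sum_{k\in K}p(k)\,\m{E}^{\delta_k}_\sigma\bigl(\frac1n\sum_{m=1}^n g(k_m,i_m)\bigr)$, and since each conditional payoff lies in $[0,1]$ the difference is bounded by $\|p-p'\|_1$. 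Your maximal-coupling construction rests on the very same structural fact (the strategy never sees the state, so identical observations force identical actions) but packages it probabilistically: on the coincidence event the two plays agree forever, and the discrepancy is bounded by $\m{P}(k_1\neq k_1')=\tfrac12\|p-p'\|_1$, which is indeed a sharper constant and applies verbatim to any $[0,1]$-valued functional of the play (though the conditioning argument gives that too, as Lemma~\ref{Djunction_opt2} illustrates). The trade-off is that your route requires verifying the measurability of the coupled kernel, which you correctly flag and which is handled just as in Propositions~\ref{Djunction} and~\ref{Djunction_MDP}, whereas the linearity argument needs no coupling at all. Both arguments are valid; yours is slightly stronger, the paper's is shorter.
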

This proposition is proved in Rosenberg, Solan and Vieille \cite[Proposition 1]{RSV02}. In their framework, $I$ is finite, but the fact that $I$ is compact does not change the proof at all.

Last, we establish the junction lemma, which replaces Lemma \ref{Djunction_opt}.

\begin{lemma}\label{Djunction_opt2}
Let $p,p' \in \Delta(K)$ and $\sigma$ be a strategy such that
\begin{equation*}
\m{E}^p_{\sigma} \left(\lim_{n \rightarrow+\infty} \frac{1}{n} \sum_{m=1}^{n} g(k_m,i_m)\right)=v(p).
\end{equation*}
Then, the following inequality holds:
\begin{equation*}
\m{E}^{p'}_\sigma\left(\liminf_{n \rightarrow+\infty} \frac{1}{n} \sum_{m=1}^{n} g(k_m,i_m) \right) \geq v(p')-2\left\|p-p'\right\|_1.
\end{equation*}
\end{lemma}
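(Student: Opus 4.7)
The plan is to exploit a structural feature specific to POMDPs: because the strategy $\sigma$ is defined on histories of actions and signals only, and never depends on the hidden state, the probability measure $\m{P}^p_\sigma$ on plays decomposes linearly in the initial distribution. More precisely, conditional on $K_1 = k$, the law of the entire sequence $(k_m,i_m,s_m)_{m \geq 1}$ depends only on $k$, $\sigma$, and the transition $q$, not on $p$; hence $\m{P}^p_\sigma = \sum_{k \in K} p(k)\, \m{P}^{\delta_k}_\sigma$. Consequently, for any bounded measurable functional $Y$ of the play, $p \mapsto \m{E}^p_\sigma(Y)$ is an affine function of $p$.

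I would apply this linearity with $Y := \liminf_{n \rightarrow +\infty} n^{-1} \sum_{m=1}^n g(k_m,i_m) \in [0,1]$. Writing $y_k := \m{E}^{\delta_k}_\sigma(Y) \in [0,1]$, we obtain
\[
\left|\m{E}^p_\sigma(Y) - \m{E}^{p'}_\sigma(Y)\right| \;=\; \Bigl|\sum_{k\in K}(p(k)-p'(k))\, y_k\Bigr| \;\leq\; \sum_{k\in K} |p(k)-p'(k)| \;=\; \|p-p'\|_1.
\]
Since the hypothesis states that the limit exists $\m{P}^p_\sigma$-a.s.\ and equals $v(p)$ in expectation, $\liminf = \lim$ almost surely under $\m{P}^p_\sigma$, so $\m{E}^p_\sigma(Y) = v(p)$. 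The inequality above then gives
\[
\m{E}^{p'}_\sigma(Y) \;\geq\; v(p) - \|p-p'\|_1.
\]

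To finish, I would use Proposition~\ref{Lip1} — which, via the same linearity trick applied to the $n$-stage averages — shows that each $v_n$ is $1$-Lipschitz with respect to $\|\cdot\|_1$. Passing to the $\limsup$ in $n$, the function $v$ is also $1$-Lipschitz, so $v(p) \geq v(p') - \|p-p'\|_1$. Combining with the previous bound yields
\[
\m{E}^{p'}_\sigma\!\left(\liminf_{n \rightarrow +\infty} \frac{1}{n}\sum_{m=1}^n g(k_m,i_m)\right) \;\geq\; v(p') - 2\|p-p'\|_1,
\]
as required. There is no genuine obstacle here; the argument is routine once one observes the affineness of $\m{P}^{\cdot}_\sigma$ in the initial belief. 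The only subtlety worth emphasising is that this affineness is precisely what allows one to bypass the Fatou-direction issue that would arise if one tried to compare $\m{E}(\liminf)$ against $\liminf \m{E}(\tfrac{1}{n}\sum)$ via an honest coupling of states; linearity makes the pathwise $\liminf$ behave as well as the finite-horizon Ces\`aro mean.
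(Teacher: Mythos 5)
Your proof is correct and follows essentially the same route as the paper: the paper likewise observes that, conditional on the initial state $k_1=k$, the law of the play under $\sigma$ does not depend on the initial belief (which is exactly your affine decomposition $\m{P}^p_\sigma=\sum_k p(k)\,\m{P}^{\delta_k}_\sigma$), bounds the difference of expectations of the pathwise $\liminf$ by $\|p-p'\|_1$, and concludes with the $1$-Lipschitz continuity of $v$ inherited from Proposition~\ref{Lip1}.
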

\begin{proof}
Let $k \in K$ and $p_1 \in \Delta(K)$. Denote by $\m{P}^{p_1}_{\sigma}(h_\infty|k)$ the law of the infinite history $h_{\infty} \in (K \times I \times S)^{\m{N}^*}$ in the POMDP $\Gamma(p_1)$, under the strategy $\sigma$, and conditional to $k_1=k$. Then $\m{P}^{p}_{\sigma}(h_\infty|k)=\m{P}^{p'}_{\sigma}(h_\infty|k)$ and 
\begin{equation*}
\m{E}^{p'}_\sigma\left(\liminf_{n \rightarrow+\infty} \frac{1}{n} \sum_{m=1}^{n} g(k_m,i_m) \right) \geq v(p)-\left\|p-p'\right\|_1.
\end{equation*}
For every $n\geq 1$, $v_n$ is $1$-Lipschitz, thus the function $v$ is also $1$-Lipschitz, and the lemma is proved.
\end{proof}

The conclusion of the proof is similar to Section \ref{concl}. Note that apart from the three main lemmas, the only additional property used in Section \ref{concl} was that the family $(v_n)_{n\geq1}$ is uniformly equicontinuous. For every $n\geq 1$, $v_n$ is $1$-Lipschitz, thus the family $(v_n)_{n\geq1}$ is indeed uniformly equicontinuous.

\section*{Acknowledgments}
Both authors gratefully  acknowledge   the support of the Agence Nationale de la Recherche, under grant ANR JEUDY, ANR-10-BLAN 0112, and thank Eugene A. Feinberg, Fabien Gensbittel, J\'er\^ome Renault and Eilon Solan for fruitful discussions.

\bibliography{bibliogen}

\end{document}